\theoremstyle{plain}
\newtheorem{theorem}{Theorem}[section]
\newtheorem*{theorem*}{Theorem}
\newtheorem*{"theorem"}{``Theorem''}
\newtheorem{lemma}[theorem]{Lemma}
\theoremstyle{definition}
\newtheorem{definition}[theorem]{Definition}
\theoremstyle{remark}
\newtheorem{remark}[theorem]{Remark}
\numberwithin{equation}{section}
\newcommand{\N}{\mathbb N}
\newcommand{\R}{\mathbb R} 
\newcommand{\dist}{{\rm dist}}
\renewcommand{\div}{{\rm div}}
\newcommand{\sdist}{{\mathrm{sdist}}}
\newcommand{\id}{\mathrm{id}}
\newcommand{\Per}{\mathrm{Per}}
\renewcommand{\H}{{\mathcal H}}
\newcommand{\E}{{\mathcal E}}
\newcommand{\C}{{\mathcal C}}
\newcommand{\F}{{\mathcal F}}
\newcommand{\Ra} {\Rightarrow}
\renewcommand{\d}{\,\mathrm{d}}
\newcommand{\dx}{\,\mathrm{d}x}
\newcommand{\dy}{\,\mathrm{d}y}
\newcommand{\ds}{\,\mathrm{d}s}
\newcommand{\dt}{\,\mathrm{d}t}
\newcommand{\cc}{\Subset}
\newcommand{\eps}{\varepsilon}
\newcommand{\average}{{\mathchoice {\kern1ex\vcenter{\hrule height.4pt
width 6pt depth0pt} \kern-9.7pt} {\kern1ex\vcenter{\hrule
height.4pt width 4.3pt depth0pt} \kern-7pt} {} {} }}
\newcommand{\Lto}{\stackrel{L^1}\longrightarrow}
\newcommand{\conv}{\mathrm{conv}}
\begin{document}

\title[A phase-field model for connected perimeters]{Approximation of the relaxed perimeter functional under a connectedness constraint by phase-fields}

\author{Patrick W.~Dondl}
\address{Patrick W.~Dondl\\Abteilung f\"ur Angewandte Mathematik\\Albert-Ludwigs-Universit\"at Freiburg\\Hermann-Herder-Str.~10\\ 79104 Freiburg i.~Br.\\Germany}
\email{patrick.dondl@mathematik.uni-freiburg.de}

\author{Matteo Novaga}
\address{Matteo Novaga\\ Dipartimento di Matematica Universit\`a di Pisa\\ Largo Bruno Pontecorvo 5\\ 56127 Pisa\\ Italy}
\email{matteo.novaga@unipi.it}

\author{Benedikt Wirth}
\address{Benedikt Wirth\\ Institut f\"ur Analysis und Numerik\\ Westf\"alische Wilhelms-Universit\"at M\"unster\\ Einsteinstr. 62\\ 48149 M\"unster\\ Germany}
\email{Benedikt.Wirth@uni-muenster.de }

\author{Stephan Wojtowytsch}
\address{Stephan Wojtowytsch\\Department of Mathematical Sciences\\Carnegie-Mellon University\\5000 Forbes Avenue\\ Pittsburgh, PA 15213, USA}
\email{swojtowy@andrew.cmu.edu}

\date{\today}

\subjclass[2010]{49J45, 51E10}
\keywords{Phase field, connectedness, Modica-Mortola, Relaxation, Topological Constraint, Steiner tree}

\begin{abstract}
We develop a phase-field approximation of the relaxation of the perimeter functional in the plane under a connectedness constraint based on the classical Modica-Mortola functional and the connectedness constraint of \cite{MR3590663}. We prove convergence of the approximating energies and present numerical results and applications to image segmentation.
\end{abstract}

\maketitle

\tableofcontents

\section{Introduction}
In this article, we consider a phase field approximation of the problem of finding a ``connected perimeter'' of a set $E\subset\R^2$. This connected perimeter is given as the limit of the perimeter of an optimal sequence of connected sets approximating $E$ in an $L^1$-sense. An application of the phase field energy developed here may be the segmentation of a given image to yield a connected (or simply connected) set. Our functional is based on the classical Modica-Mortola energy with an additional energy term that penalizes non-path-connectedness of the preimage of a given interval under the phase field function. Similar to the methods in~\cite{bonnivard:2014tw, Benmansour:2010dm} we use a geodesic distance in order to detect path-(dis)connectedness.

In~\cite{MR3590663, Dondl:2018wb}, this topological functional was introduced in the context of diffuse curvature dependent energies. Here, we show that the $\Gamma$-limit of the sum of the usual Modica-Mortola-energy and our topological energy~\eqref{eq:top_energy} is given by the $L^1$-relaxation of the perimeter functional when only considering connected approximating sets. In the sharp-interface setting, this relaxation has been studied in~\cite{novaga-etal}.

The article is structured as follows. In section \ref{section review sharp interface} we recall the results of \cite{novaga-etal} for the sharp interface problem and construct the connection between our result on finite domains and the sharp interface problem posed in the plane. Section \ref{section phase field functional} contains an intuitive explanation of how our phase-field energy incorporates the connectedness constraint. The main result on the approximation of the connected relaxation of the perimeter functional by phase-field energies is stated and proved in section \ref{section main}. Some extensions of the result concerning approximation by simply connected sets and relative perimeters are collected in section \ref{section extensions}. Finally, we present numerical evidence for the effectiveness of our approach to connectedness for diffuse sets in section \ref{section numerics}. A technical result on the approximation of the closest point projection onto a closed convex sets by non-expansive diffeomorphisms is proved in the appendix.

\section{Approximation by Connected Sets}

\subsection{The sharp interface model}\label{section review sharp interface}

For open sets $E\subset \R^2$ such that the characteristic function $\chi_E$ of $E$ is in $BV(\R^2)$, the perimeter functional
\[
P(E) = |D\chi_E|(\R^2) = \sup\left\{ \int_E \div(u)\dx \:\bigg|\: u\in C_c^1(\R^2), \:|Du|\leq 1\right\}
\]
is a generalized measure of the size of the boundary of $E$ which agrees with the $\H^1$-measure of the boundary on Lipschitz sets due to the Gauss-Green theorem. It is well known that $P$ is lower semi-continuous under the $L^1$-convergence of characteristic functions \cite[Proposition 3.38]{ambrosio2000functions}, and that the characteristic functions of $C^\infty$-smooth open sets lie $L^1$-dense in the collection of characteristic functions of sets of finite perimeter \cite[Theorem 3.42]{ambrosio2000functions}. Therefore, the $L^1$-lower semi-continuous envelope of the perimeter functional without any additional constraints agrees with the functional itself, independently of whether the approximating sets in the relaxation process are taken to be smooth or not. In this article, we wish to consider a similar relaxation process, but under additional topological constraints.

For an open set $E\subset \R^2$ we define two relaxations of the perimeter functional under a connectedness constraint
\begin{align*}
\overline{P_{C}}(E) & : = \inf\left\{\liminf_{n\to \infty} P(E_n)\:\bigg|\: E_n\stackrel{L^1}\to E, \:E_n \text{ indecomposable} \right\}\\
\overline{P_{C}^r}(E) & : = \inf\left\{\liminf_{n\to \infty} P(E_n)\:\bigg|\: E_n\stackrel{L^1}\to E,  \:E_n \text{ connected and $C^\infty$-smooth} \right\}
\end{align*}
which differ in the degree of smoothness required of the approximating sets. Here `indecomposable' is a measure-theoretic analogue of the notion of connectedness for open sets which are only defined in the $L^1$-sense. As usual, we take the $L^1$-topology on equivalence classes of bounded measurable sets as induced by the metric given by the $L^1$-distance of their characteristic functions or equivalently the Lebesgue measure of their symmetric difference $E\Delta F = (E\cup F) \setminus E\cap F$
\[
d_{L^1}(E, F) = \int_{\R^2} |\chi_E - \chi_F| \dx = \big|E\Delta F\big|.
\]

\begin{definition}\cite{ambrosio2001connected}
An open set $U$ is called decomposable if there exist open sets $U_1, U_2$ such that $U = U_1\cup U_2$ (in the $L^1$-sense) and $P(U) = P(U_1) + P(U_2)$. It is called indecomposable if it is not decomposable.
\end{definition}

Heuristically, a set is decomposable if we need not create significant new boundaries when cutting it into pieces. It was shown recently   \cite{novaga-etal} that for (essentially) bounded sets $E\subset\R^2$ such that $\partial E = \partial_*E$ modulo sets of zero $\H^1$-measure the identity 
\[
\overline{P_C}(E) = \overline {P_C^r}(E) = P(E) + 2\,St(E)
\]
holds where $St(E)$ is the length of the Steiner tree of $\overline E$, i.e.\ 
\[
St(E) = \inf\left\{\H^1(K)\:|\: E\cup K \text{ connected}\right\}.
\]
Above, $\H^1$ denotes the $1$-dimensional Hausdorff measure on $\R^2$ and $\partial_*E$ is the essential boundary of $\overline E$, see \cite[Definition 3.60]{ambrosio2000functions}.
For the existence of Steiner trees, their properties and regularity see \cite{paolini2013existence}.

In this article, we develop a phase-field energy functional which approximates the connected relaxation of the perimeter functional in the sense of $\Gamma$-convergence. For technical reasons, we prefer to work on a bounded domain $\Omega$, so we introduce a similar relaxation in this setting:
\[
\overline{P_{C,cc,\Omega}^r}(E) : = \inf\left\{\liminf_{n\to \infty} P(E_n)\:\bigg|\: E_n\stackrel{L^1}\to E, \: E_n\cc\Omega, \:E_n \text{ connected and $C^\infty$-smooth} \right\}.
\]

The notation $E\cc\Omega$ signifies that $E\subset \overline E\subset\Omega$ and that $\overline E$ is compact. In general, forcing sets to remain within $\Omega$ may force us to make longer connections than the $\R^2$-Steiner tree which leads to $\overline{P_{C, cc, \Omega}^r}(E) \neq \overline{P_C}(E)$. We do not provide an explicit characterization of the lsc envelope in this case, nor do we discuss the relationship of other possible relaxations. If $\Omega$ is convex, on the other hand, then a connected set approximating $E$ `gains nothing' by leaving $\Omega$, and $\overline {P _ {C, cc, \Omega}^r} = \overline{P_C}$. We prove a slightly more general statement.

\begin{lemma}\label{lemma convex}
Assume that the convex hull of $E$ is contained in $\Omega$. Then $\overline{P_{C,cc,\Omega}^r} (E) = \overline{P_C^r}(E)$.
\end{lemma}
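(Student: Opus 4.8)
The inequality $\overline{P_{C,cc,\Omega}^r}(E)\ge\overline{P_C^r}(E)$ is immediate and I would dispatch it first: every sequence admissible for $\overline{P_{C,cc,\Omega}^r}$ is also admissible for $\overline{P_C^r}$, so the infimum defining the latter runs over a larger class. The content of the lemma is the reverse inequality $\overline{P_{C,cc,\Omega}^r}(E)\le\overline{P_C^r}(E)$, and the plan is to take an arbitrary admissible sequence $(E_n)$ for $\overline{P_C^r}(E)$ and push it into $\Omega$ by a map that neither increases the perimeter nor destroys smoothness or connectedness, and that moves $E$ only a little. The map I would use is an approximation of the closest-point projection onto a convex set containing $\conv(E)$: convexity is exactly what makes this projection $1$-Lipschitz, and the hypothesis $\conv(E)\subset\Omega$ is what guarantees that projecting does not move $E$.

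Concretely, I would first fix a compact convex set $K$ with $\conv(E)\subseteq K\cc\Omega$; such a $K$ exists because $\ol{\conv(E)}$ is compact and contained in the open set $\Omega$ (e.g.\ a small closed neighborhood of $\conv(E)$, which is again convex). Let $\pi_K$ denote the closest-point projection onto $K$, which is $1$-Lipschitz and restricts to the identity on $K\supseteq E$. By the technical result proved in the appendix, there is a family of non-expansive (i.e.\ $1$-Lipschitz) smooth diffeomorphisms $\Phi_\delta$ of $\R^2$ onto bounded open sets $U_\delta=\Phi_\delta(\R^2)$ converging locally uniformly to $\pi_K$ as $\delta\to0$, with $U_\delta\cc\Omega$ for $\delta$ small. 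Setting $\tilde E_n^\delta:=\Phi_\delta(E_n)$, I would verify the four required properties: $\tilde E_n^\delta$ is $C^\infty$-smooth and connected because $\Phi_\delta$ is a diffeomorphism onto its image; $\tilde E_n^\delta\cc\Omega$ because $\tilde E_n^\delta\subseteq U_\delta\cc\Omega$; and finally $P(\tilde E_n^\delta)\le P(E_n)$. This last, central point follows because $\partial\tilde E_n^\delta=\Phi_\delta(\partial E_n)$ and, $\Phi_\delta$ being $1$-Lipschitz, its tangential Jacobian along the smooth curve $\partial E_n$ is bounded by $1$, so by the length formula $P(\tilde E_n^\delta)=\H^1(\Phi_\delta(\partial E_n))\le\H^1(\partial E_n)=P(E_n)$.

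It remains to control the $L^1$-distance to $E$. For fixed $\delta$ the push-forward is an $L^1$-contraction: since $\chi_{\Phi_\delta(F)}=\chi_F\circ\Phi_\delta^{-1}$ and $|\det D\Phi_\delta|\le1$, the change of variables $y=\Phi_\delta(x)$ gives $d_{L^1}(\tilde E_n^\delta,\Phi_\delta(E))\le d_{L^1}(E_n,E)$, whence $\tilde E_n^\delta\Lto\Phi_\delta(E)$ as $n\to\infty$; moreover $\Phi_\delta(E)\Lto E$ as $\delta\to0$ because $\Phi_\delta\to\id$ uniformly on $K\supseteq E$. Writing $L:=\liminf_n P(E_n)$ and passing to a subsequence realizing this liminf, a standard diagonal argument then selects $\delta_m\to0$ and indices $n_m$ so that $F_m:=\tilde E_{n_m}^{\delta_m}$ satisfies $d_{L^1}(F_m,\Phi_{\delta_m}(E))\le\delta_m$ and $P(F_m)\le L+\delta_m$; hence $F_m\Lto E$, $\liminf_m P(F_m)\le L$, and each $F_m$ is an admissible competitor for $\overline{P_{C,cc,\Omega}^r}(E)$. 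This yields $\overline{P_{C,cc,\Omega}^r}(E)\le\liminf_n P(E_n)$, and taking the infimum over all admissible $(E_n)$ proves the claim.

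I expect the genuine difficulty to lie entirely in the appendix result being invoked, namely that $\pi_K$ can be approximated by non-expansive \emph{diffeomorphisms} rather than merely by non-expansive maps, which would fail to preserve smoothness and connectedness: the projection itself is neither injective nor smooth, so the construction must smooth it out and make it injective while keeping the Lipschitz constant equal to $1$ and the image compactly inside $\Omega$. Given that tool, the remaining steps are routine; the only points demanding mild care are the length-contraction estimate $P(\Phi_\delta(E_n))\le P(E_n)$, which uses the smoothness of $E_n$ so that the boundary is a genuine curve whose image length can be compared, and the diagonalization, both of which are standard.
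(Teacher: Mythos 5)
Your overall strategy -- dispatch the trivial inequality, then push an optimal unconstrained sequence into $\Omega$ with a $1$-Lipschitz smooth diffeomorphism built from the closest-point projection onto a convex set -- is exactly the paper's core idea, but your very first concrete step contains a genuine gap. You assert that a compact convex $K$ with $\conv(E)\subseteq K\cc\Omega$ exists ``because $\ol{\conv(E)}$ is compact and contained in the open set $\Omega$.'' The hypothesis of the lemma is only $\conv(E)\subset\Omega$, which does \emph{not} imply $\ol{\conv(E)}\subset\Omega$: take $\Omega=B_1(0)$ and $E=\Omega$, or any open $E$ whose closure touches $\partial\Omega$; then $\conv(E)\subset\Omega$ holds but no compact set $K\cc\Omega$ can contain $\conv(E)$. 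In this borderline case your family $\Phi_\delta$ cannot simultaneously have image $U_\delta\cc\Omega$ and converge to the identity on all of $E$, so the key $L^1$-control $\Phi_\delta(E)\Lto E$ has no basis. This is not a pathological corner case: it is precisely the situation the hypothesis is phrased to allow, and the paper devotes half of its proof to it.

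The paper resolves it by first approximating $E$ from \emph{inside}: after translating so that $0\in\conv(E)$, it sets $E_n=\frac{n-1}{n}E$, shows $\ol{E_n}\cc\conv(E)$ and $E_n\Lto E$, uses the scaling identity $\overline{P_C^r}(E_n)=\frac{n-1}{n}\,\overline{P_C^r}(E)$, and concludes by a diagonal argument; only for these shrunk sets does it invoke the appendix lemma, with $U=\conv(E)$ and $K=\ol{E_n}$. Note also that the appendix lemma gives a single diffeomorphism $\phi:\R^2\to U$ with $\phi=\id$ \emph{on} $K$, not a family approximating $\pi_K$ as you invoke; the identity-on-$K$ version is both what is actually proved and simpler to use, since $\phi=\id$ on $E_n$ yields $|\phi(E_n^k)\Delta E_n|\le|E_n^k\Delta E_n|$ directly, with no diagonalization in $\delta$. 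The remainder of your argument -- perimeter non-increase via $\H^1(\phi(\partial E_n^k))\le\H^1(\partial E_n^k)$, preservation of smoothness and connectedness, compact containment of the image -- coincides with the paper's Step 2 and is sound. Your proof could be repaired without the dilation by taking $U=\conv(E)$, an exhaustion by compact sets $K_m\uparrow\conv(E)$, the estimate $|\phi_m(E)\Delta E|\le 2\,|E\setminus K_m|\to0$, and a diagonal argument in $(m,n)$; but some such inner-approximation step is indispensable, and it is exactly what your proposal omits.
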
 

Before we prove Lemma \ref{lemma convex}, we introduce a separate useful statement. We assume this to be well-known, but have been unable to find a reference for it.

\begin{lemma}\label{lemma retraction}
Let $U\subset \R^n$ be a convex open set and $K\subset U$ compact. Then there exists a $C^\infty$-diffeomorphism $\phi:\R^n\to U$ such that
\begin{enumerate}
\item $\phi(x) = x$ for all $x\in K$ and
\item $|\phi(x) - \phi(y)|\leq |x-y|$ for all $x, y\in \R^n$.
\end{enumerate}
\end{lemma}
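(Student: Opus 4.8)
The plan is to regularise the nearest-point projection onto $\overline U$. If $U=\R^n$ the identity already works, so assume $U\neq\R^n$ and set $d:=\dist(K,\partial U)\in(0,\infty)$; after a translation we may take $0\in U$. Write $C:=\overline U$, a closed convex set with nonempty interior, and let $\pi:=\pi_C\colon\R^n\to C$ be the nearest-point projection. This map is the model for $\phi$: it is non-expansive (a standard consequence of convexity of $C$) and it restricts to the identity on $C$, in particular $\pi|_K=\id$. Thus $\pi$ already satisfies two of the three requirements. What it fails is being a $C^\infty$-diffeomorphism onto $U$: it is not smooth across $\partial U$, it is not injective (it collapses $\R^n\setminus C$ onto $\partial C$), and its image is $C$ rather than the open set $U$. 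The whole difficulty is to repair these three defects without destroying non-expansiveness or the identity on $K$.

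My approach is to reparametrise $\pi$ in the direction transverse to $\partial U$ by a one-dimensional non-expansive profile. Let $b$ denote the signed distance to $\partial U$, positive inside $U$; since $\overline U$ is an intersection of supporting halfspaces, $b$ is an infimum of affine functions, hence concave, and it is $1$-Lipschitz, with $\{b\ge d\}\supseteq K$. Informally, each $x$ should be sent to the point lying over the same boundary projection as $x$ but at signed distance $h(b(x))$, where $h\colon(-\infty,\sup b)\to(0,\sup b)$ is a smooth, strictly increasing profile with $h'\in(0,1]$, $h(t)=t$ for $t\ge d$, and $h(t)\to 0$ as $t\to-\infty$. The condition $h(t)=t$ for $t\ge d$ leaves the inner region $\{b\ge d\}\supseteq K$ fixed; the decay $h(t)\to0$ squashes the exterior $\{b<0\}$ smoothly into the open collar $\{0<b\}$, so that the image becomes exactly $U$ and $\phi$ becomes injective; and $h'\in(0,1]$ is what forces non-expansiveness. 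This is exactly what happens for a ball $B_s$, where the construction reduces to the radial map $x\mapsto f(|x|)\,x/|x|$ with $f(r)=s-h(s-r)$: here $f'=h'\le1$ and $f(r)/r\le1$ (the latter because $h'\le1$ forces $h(t)\ge t$), so both the radial and the tangential singular values of $D\phi$ are at most $1$ and $\phi$ is non-expansive.

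The step I expect to be the main obstacle is making this transverse reparametrisation rigorous for a general convex $U$, whose boundary need not be smooth. At corners of $\partial U$ the inward normal is not single-valued and, inside $U$, the distance $b$ is non-differentiable along the medial axis, so the clean normal-coordinate picture and the tidy singular-value computation available for the ball are not literally at hand. Smoothing $\pi$ in a neighbourhood of a possibly non-smooth $\partial U$ while \emph{simultaneously} preserving injectivity, the exact image $U$, the bound $\mathrm{Lip}(\phi)\le1$, and the identity on $K$ is precisely the technical statement on approximating the closest-point projection by non-expansive diffeomorphisms that is deferred to the appendix. Concretely, I would invoke that construction for $C=\overline U$ and use the margin $d=\dist(K,\partial U)>0$ to confine all modifications to the region $\{b<d\}$, so that the resulting $\phi$ is a non-expansive $C^\infty$-diffeomorphism of $\R^n$ onto $U$ that is unchanged, hence equal to $\id$, on $K$. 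Smoothness, the diffeomorphism property and $\phi|_K=\id$ are then immediate from the construction, while non-expansiveness follows from the profile bound $h'\le1$ together with the non-expansiveness of $\pi$ and the concavity of $b$.
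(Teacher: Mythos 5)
There is a genuine gap, and you in fact name it yourself before stepping around it circularly. Your engine --- reparametrising the nearest-point projection transversally to the boundary by a smooth profile with slope in $(0,1]$, with non-expansiveness coming from the profile bound and the variational characterisation of the projection --- is exactly the engine of the paper's proof (its Steps 2--4). But you attach it to $C=\overline U$, whose boundary may have corners: there the normal field is multivalued, the signed distance $b$ is merely concave and non-differentiable, and there is no tubular-neighbourhood structure in which your transverse reparametrisation is even defined. At precisely that point you write that you ``would invoke that construction'' on approximating the projection by non-expansive diffeomorphisms ``deferred to the appendix'' --- but the appendix statement in question \emph{is} Lemma \ref{lemma retraction}, the result you are proving. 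As written, the proposal assumes its own conclusion for $C=\overline U$; nothing in your text actually resolves the smoothing problem, and your rigorous computation covers only the radially symmetric case of a ball.

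The idea you are missing is the paper's Step 1, which turns the problem inside out: instead of regularising $\partial U$ and insisting on image exactly $U$, interpose a $C^\infty$ compact convex body $E$ with $\conv(K)\cc E\cc U$. Such an $E$ is produced by mollifying the convex, $1$-Lipschitz function $x\mapsto\dist(x,\conv(K))$ (mollification preserves convexity) and taking a sublevel set at a regular value furnished by Sard's theorem. Because $\partial E$ is smooth and $E$ is convex, the map $(x,t)\mapsto x+t\nu_x$ is a \emph{global} diffeomorphism from $\partial E\times(0,\infty)$ onto $\R^n\setminus\overline E$ (convexity gives $\langle\tau,A\tau\rangle\ge 0$ for the shape operator, so the differential never degenerates outside), and one sets $\phi=\id$ on a neighbourhood of $\overline E$ and $\phi(x)=\pi(x)+f(d_x)\,\nu_{\pi(x)}$ outside, with $f(t)=t$ for small $t$, $0<f'\le 1$ and $f<\delta/2$ where $\delta=\dist(\partial E,\partial U)$. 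The image is then the collar $\{\dist(\cdot,E)<\delta/2\}$, \emph{not} all of $U$; this suffices, since the lemma is only ever used through $\phi$ being a smooth non-expansive embedding fixing $K$ with image compactly contained in $U$ (in the proof of Lemma \ref{lemma convex} one needs exactly $\phi(E_n^k)\cc\conv(E)$). Your insistence on the exact image $U$ is thus both unnecessary and strictly harder than what is required --- it is what forces you to confront the corners of $\partial U$ in the first place. If you replace your appeal to ``the appendix construction'' by this inner smoothing step, the rest of your outline (profile reparametrisation, $h'\le 1$, projection monotonicity) can be completed along the lines of the paper's Step 4.
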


Since the proof of Lemma \ref{lemma retraction} is unrelated to the main points of this article, we have moved it to the appendix. Now we can prove Lemma \ref{lemma convex}.

\begin{proof}[Proof of Lemma \ref{lemma convex}]
It is clear that 
\[
\overline{P_{C,cc,\Omega}^r} (E) \geq \overline{P_C^r}(E)
\]
since fewer sets are admissible in the approximation process, so it suffices to prove the inverse inequality. Without loss of generality, we may also assume that $E\neq \emptyset$ and $P(E)<\infty$.

{\bf Step 1.} Suppose that $0\in \conv(E)$ and denote $E_n:= \frac{n-1}n E$. Since $\conv(E)$ is open, there exists $r>0$ such that $B_r(0)\in \conv(E)$, thus if $x\in E_n$ and $y\in\R^2$ with $|y-x|< \frac rn$, then $\frac{n}{n-1}x\in E$ and thus
\[
 y = x+(y-x) =  \left(1- \frac1n\right)\left[\frac n{n-1} x\right] + \frac1n \left[n\cdot (y-x)\right] \in \conv(\conv(E)) = \conv(E)
\]
which shows that $E_n\cc\conv(E)\subset\Omega$. Furthermore, $P(E_n) = \frac{n-1}n P(E) \leq P(E)$, so by $BV$-compactness there exists a set $E_\infty\subset \conv(E)$ such that $E_n\to E_\infty$ in $L^1$ (up to a subsequence).
Now if $x\in E$, there exists $r>0$ such that $B_r(x)\subset E$ and thus in particular there exists $N\in \N$ such that 
\[
\frac{n}{n-1}\, y \in B_r(x) \subset E \qquad\forall\ y\in B_{r/2}(x), n\geq N
\]
since $f_n(x) = \frac{n}{n-1}x$ converges to the identity map locally uniformly on $\R^n$. Hence $B_{r/2}(x)\subset E_n$ for all $n\geq N$ and therefore also $B_{r/2}(x)\subset E_\infty$. In total, this implies that $E\subset E_\infty$, and since $|E_n| = \frac{n-1}{n}\,|E|\leq |E|$, it follows that $|E_\infty|\leq |E|$ which combines to the statement that $E_\infty=E$.
The uniqueness of the limit shows that in fact $E_n\to E$ also without choosing a subsequence.

Assuming for the moment that $\overline{P^r_C}(E_n) \equiv \overline{P^r_{C,cc,\Omega}}(E_n)$ for all $n\in \N$, we take a sequence of smooth connected sets $E_n^k\cc\Omega$ such that
\[
\lim_{k\to \infty} E_n^k = E_n, \qquad \lim_{k\to \infty} P(E_n^k) = \overline{P_{C,cc,\Omega}^r}(E_n) = \overline{P^r_C}(E_n)
\]
and a diagonal sequence $E_n' = E_n^{k_n}$ such that $E_n'\to E$ and
\[
 \lim_{n\to \infty} P(E_n') = \lim_{n\to \infty}\lim_{k\to\infty} P(E_n^k) = \lim_{n\to\infty} \overline{P_C^r}(E_n) = \lim_{n\to \infty}\frac{n-1}n\, \overline{P_C^r}(E) = \overline{P_C^r}(E)
\]
whence $\overline{P^r_{C,cc,\Omega}}(E) \leq \lim_{n\to \infty} P(E_n') = \overline{P^r_C}(E)$ and thus $\overline{P^r_{C,cc,\Omega}}(E) =  \overline{P^r_C} (E)$.

{\bf Step 2.} It remains to prove that $\overline{P_{C,cc,\Omega}^r}(E_n) = \overline{P^r_C}(E_n)$ holds for all $n\in\N$. Fix any $n\in \N$ and let $E_n^k$ be a sequence of smooth connected subsets of $\R^2$ such that 
\[
\lim_{k\to \infty} E_n^k = E_n, \qquad \lim_{k\to \infty} P(E_n^k) =  \overline{P^r_C}(E_n).
\]
Now let $\phi$ be a diffeomorphism as in Lemma \ref{lemma retraction} with $U = \conv(E)$ and $K = \overline{E_n}$. Since $\phi$ is a $C^\infty$-diffeomorphism, $\phi(E_n^k)$ is also a connected open set with $C^\infty$-boundary, but additionally $\phi(E_n^k)\cc \conv(E)$. Since $\phi$ is $1$-Lipschitz, we find that
\[
P(\phi(E_n^k)) = \H^1\left(\partial \phi(E_n^k)\right) = \H^1\left(\phi(\partial E_n^k)\right) \leq \H^1\left(\partial E_n^k\right) = P(E_n^k)
\]
and also
\[
\big| \phi(E_n^k) \Delta E_n\big| = \big|\phi(E_n^k)\Delta \phi(E_n)\big| = \big|\phi(E_n^k\Delta E_b)\big| \leq \big|E_n^k\Delta E_n\big|
\]
since $\phi = \id$ on $E_n$. Thus $\phi(E_n^k)\to E_n$ in $L^1$ and
\[
\overline{P^r_C}(E_n) = \lim_{k\to \infty} P(E_n^k) \geq 
\liminf_{k\to\infty} P(\phi(E_n^k))\geq \overline{P^r_{C,cc,\Omega}}(E_n) 
\]
by the definition of the lower semi-continuous envelope. Since the opposite inequality is obvious, we find that $\overline{P^r_C}(E_n) = \overline{P^r_{C,cc,\Omega}}(E_n)$ which concludes the proof.
\end{proof}

This result remains true if we consider the relaxation of the perimeter functional under approximation by {\em simply connected} sets. Also for this functional, an explicit characterization is available due to \cite{novaga-etal} as
\[
\overline{P_{sc}}(E) = \overline{P_{sc}^r}(E) = P(E) + 2\,St(E) + 2\,St(E^c)
\]
with notation analogous to the connected relaxation. We will briefly come back to this problem in Theorem \ref{theorem simply connected}.

\begin{remark}
Let $E \subset \R^3$ be the set given by 
\[
E = B_1(-Re_1) \cup B_1(Re_1)
\]
for some $R\gg 1$. The sets
\[
E_n = E \cup \left\{ (x,y,z) \in \R^3\:|\: -R < x<R, \:y^2 + z^2 < \frac1{n^2}\right\}
\]
are connected, open, have a Lipschitz boundary and satisfy $P(E_n) \to P(E)$. A slightly modified sequence of sets can be constructed to have $C^\infty$-boundaries. Thus, since we can connect two components of an open set with a tube of small volume and perimeter, we do not expect the relaxation of the perimeter functional under a connectedness constraint to exhibit any interesting behaviour in ambient spaces of dimension $\geq 2$. 
\end{remark}

\subsection{The phase-field model}\label{section phase field functional}

We choose the classical Modica-Mortola approximation \cite{modica:1987us}
\[
\F_\eps(u) = \frac1{c_0}\int_\Omega\frac\eps2\,|\nabla u|^2 + \frac1\eps\,W(u)\dx 
\]
of the perimeter functional where $W(u) = u^2(u-1)^2$ is a double-well potential and $c_0$ is a normalising constant given by
\[
c_0 = \int_0^1 \sqrt{2\,W(s)}\ds.
\]
To incorporate a connectedness constraint, we follow an idea developed by two of the authors for a problem of surfaces in a three-dimensional ambient space \cite{MR3590663} based on a similar model for the two-dimensional Steiner problem and related questions \cite{bonnivard:2014tw}. Due to its novelty, we include a heuristic motivation here.

Recall that an open set $E$ in $\R^n$ is connected if and only if it is path-connected, so if and only if for every $x,y\in \R^n$ there exists a continuous curve $\gamma:[0,1]\to \R^n$ such that $\gamma(0) = x$, $\gamma(1)=y$ and $\gamma(t)\in E$ for all $t\in [0,1]$. It is well-known that we may assume that $\gamma$ is smooth. 

We introduce a quantitative notion of path-connectedness to generalize this concept. Let $F\in C^0(\R^n)$ be a function such that
\[
F(x) = 0 \text{ if }x\in \overline{E}, \qquad F(x)>0\text{ if }x\in \R^n\setminus\overline E
% capital F is also used later, just like \beta is used for both sharp interface case and phase field
\]
and define the geodesic distance
\[
d^F(x,y) = \inf\left\{\int_\gamma F\d\H^1\:\bigg|\:\gamma(0) = x, \:\gamma(1) = y,\: \gamma:[0,1]\to\R^n \text{ Lipschitz}\right\}.
\]
Then, if $x, y$ lie in the same connected component of $E$ (or rather, $\overline E$), then $d^F(x,y)=0$, while if they lie in different connected components, we would expect them to be separated by a positive $d^F$-distance (at least in `nice' cases). So we think of $d^F(x,y)$ as a quantitative measure of the path-disconnectedness of the set $E$ at the points $x,y\in E$. To obtain a single number to measure the total path-disconnectedness of $E$, we can consider a double-integral
\[
\int_E\int_E d^F(x,y)\dx\dy\quad\text{or}\quad \int_{\R^n}\int_{\R^n} \beta(x)\,\beta(y)\,d^F(x,y)\dx\dy
\]
where $\beta:\R^n\to\R$ is a measurable function such that $\beta(x) = 0$ if $x\in\R^n\setminus E$ and $\beta(x)>0$ if $x\in E$.

This can be adapted to a phase-field setting as follows. We want to approximate a set $E$ by connected open sets $E_n$. Letting $u$ denote the phase-field function approximating the characteristic function of $E$ for some phase-field parameter $\eps$, this corresponds to keeping the set $\{u\approx 1\}$ connected. More precisely, we choose $0<s<1/2$ and penalize the quantitative total disconnectedness of the set $\{1-\eps^s < u_\eps\}$. So take Lipschitz-functions $\beta_\eps, F_\eps$ which are monotone increasing/decreasing respectively such that
\[
\beta_\eps(z) = \begin{cases} 0 & z\leq 1- 2\eps^s\\ 1 &z\geq 1-\eps^s\end{cases}, \qquad F_\eps(z) = \begin{cases} 1 & z\leq 1- 2\eps^s\\ 0 &z\geq 1-\eps^s\end{cases}.
\]

Note that if $\gamma$ is a Lipschitz-curve, we can take the trace of a $W^{1,2}$-function $u$ on $\gamma$, so that a geodesic distance with weight $F_\eps(u)$ can be defined in the same way as above, albeit with a weight which is only non-negative, bounded and measurable on the curve. We introduce the `diffuse connectedness functional'
\begin{equation} \label{eq:top_energy}
\C_\eps(u) = \int_\Omega\int_\Omega \beta_\eps(u(x))\:\beta_\eps(u(y))\:d^{F_\eps(u)}(x,y)\dx \dy
\end{equation}
and the total energy of a phase-field
\[
\E_\eps:W^{1,2}_0(\Omega)\to \R, \quad \E_\eps(u) = \F_\eps(u) + \eps^{-\kappa}\,\C_\eps(u)
\]
for some $\kappa>0$ which measures the perimeter and penalizes disconnectedness.

\begin{remark} 
We can allow different double-well potentials $W$, but we need to couple the parameter $s$ in the choice $\beta_\eps, F_\eps$ to the order at which $W$ vanishes at the potential wells.
\end{remark}

\subsection{The sharp interface limit}\label{section main}

In this section we prove our main result, which essentially states that the functionals $\E_\eps$ approximate the relaxed connected perimeter.

\begin{theorem}\label{theorem main}
Let $\Omega\subset \R^2$ be open and bounded. Then
\[
\left[\Gamma(L^1)-\lim_{\eps\to 0} \E_\eps\right] (u) = \begin{cases}\overline{P_{C,cc,\Omega}^r}(\{u=1\}) &\text{if } u\in BV(\overline\Omega, \{0,1\})\\ +\infty &\text{else.}\end{cases}
\]
\end{theorem}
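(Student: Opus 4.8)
The plan is to prove the two inequalities of $\Gamma$-convergence separately, after disposing of compactness and the trivial branch. If $\sup_\eps\E_\eps(u_\eps)<\infty$, then $\F_\eps(u_\eps)$ is bounded, so the classical Modica--Mortola theory supplies $L^1$-precompactness and forces every cluster point $u$ to lie in $BV(\overline\Omega,\{0,1\})$; in particular, whenever $u$ is not of this form one has $\F_\eps(u_\eps)\to\infty$ and hence $\E_\eps(u_\eps)\to\infty$, which yields the value $+\infty$. Below I write $u=\chi_E$ for an admissible limit.

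For the recovery sequence (the $\limsup$ inequality) I would start from the definition of $\overline{P_{C,cc,\Omega}^r}(E)$ and choose smooth connected sets $E_j\Subset\Omega$ with $E_j\to E$ in $L^1$ and $P(E_j)\to\overline{P_{C,cc,\Omega}^r}(E)$. For each fixed $j$ I take the usual optimal-profile phase field $u_\eps^j$ built from the signed distance to $\partial E_j$, so that $u_\eps^j\in W^{1,2}_0(\Omega)$ and $\F_\eps(u_\eps^j)\to P(E_j)$. The only genuinely new point is that the connectedness penalty is asymptotically negligible: since $E_j$ is connected and smooth, for small $\eps$ the super-level set $\{u_\eps^j\ge 1-\eps^s\}$ is a slightly eroded, still connected copy of $E_j$ on which $F_\eps(u_\eps^j)$ vanishes. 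Every point carrying weight $\beta_\eps>0$ lies within distance $O(\eps|\log\eps|)$ (the width of the transition shell for the profile associated with $W$) of this connected set, so $d^{F_\eps(u_\eps^j)}(x,y)=O(\eps|\log\eps|)$, whence $\C_\eps(u_\eps^j)=O(\eps|\log\eps|)$ and $\eps^{-\kappa}\C_\eps(u_\eps^j)=O(\eps^{1-\kappa}|\log\eps|)\to0$ for $\kappa<1$. A diagonal extraction in $(j,\eps)$ then produces a recovery sequence for $E$.

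The substance is the $\liminf$ inequality. Given $u_\eps\to\chi_E$ with $\Lambda:=\liminf_{\eps\to0}\E_\eps(u_\eps)<\infty$, I pass to a subsequence realising $\Lambda$ with bounded energy and aim to exhibit \emph{connected} smooth competitors $E_\eps\Subset\Omega$ with $E_\eps\to E$ in $L^1$ and $P(E_\eps)\le\F_\eps(u_\eps)+o(1)\le\E_\eps(u_\eps)+o(1)$; the definition of the relaxation then gives $\overline{P_{C,cc,\Omega}^r}(E)\le\liminf_{\eps\to0}P(E_\eps)\le\Lambda$. For the perimeter bound I use the coarea form of the Modica--Mortola inequality, $\F_\eps(u_\eps)\ge\frac1{c_0}\int_0^1\sqrt{2W(t)}\,P(\{u_\eps>t\})\dt$. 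Since $\frac1{c_0}\int_0^1\sqrt{2W(t)}\dt=1$ and the weight $\sqrt{2W(t)}$ carries negligible mass near $t=1$, an averaging argument produces, for each $\eps$, a level $t_\eps<1-\eps^s$ with $P(\{u_\eps>t_\eps\})\le\F_\eps(u_\eps)+o(1)$; Sard's theorem lets me take $t_\eps$ a regular value, so $E_\eps:=\{u_\eps>t_\eps\}$ has smooth boundary and $E_\eps\to E$ in $L^1$ because the transition layer has vanishing measure. A key structural remark is that the naive Modica--Mortola limit recovers only $P(E)$: the thin necks connecting the components of $E$ have vanishing volume and are invisible in the $L^1$-limit, so the extra length $2\,St(E)$ must be extracted before passing to the limit, through the connectedness term.

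The hardest step, where I expect the main difficulty to lie, is to guarantee that $E_\eps$ is connected, i.e.\ to convert the analytic smallness $\C_\eps(u_\eps)=O(\eps^\kappa)\to0$ into genuine topological information. The mechanism I would exploit is quantitative: because $F_\eps(u_\eps)=0$ only where $u_\eps\ge1-\eps^s$, two substantial pieces of $\{u_\eps\approx1\}$ (each of measure $\ge\delta$) separated in $F_\eps(u_\eps)$-geodesic distance by $\ge\rho$ would give $\C_\eps(u_\eps)\gtrsim\delta^2\rho$, contradicting $\C_\eps(u_\eps)\to0$. Hence the substantial components must be linked by necks on which $u_\eps$ is close to $1$ and spanning the fixed gaps between the components of $E$; such necks lie in $\{u_\eps>t_\eps\}$ since $t_\eps<1-\eps^s$, so they already connect $E_\eps$, and the Modica--Mortola energy of each neck, of order twice its length, is exactly what charges the Steiner contribution into $\F_\eps(u_\eps)$ while preserving $P(E_\eps)\le\F_\eps(u_\eps)+o(1)$. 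Spurious components on which the penalty carries no weight (peak below $1-2\eps^s$) have measure of order $\eps^{2-2s}\to0$ by the choice $0<s<1/2$ and are discarded, and the residual short gaps are bridged at $o(1)$ cost in perimeter. Making this surgery rigorous—bounding the number and measure of components, selecting and smoothing the necks, and keeping the perimeter bookkeeping sharp while remaining compactly inside $\Omega$—is the principal technical burden, and here I would follow and adapt the construction of \cite{MR3590663}.
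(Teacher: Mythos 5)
Your overall architecture matches the paper's: compactness and the $+\infty$ branch via standard Modica--Mortola theory, a recovery sequence built from the optimal profile for smooth connected approximants $E_j\cc\Omega$ followed by a diagonal argument, and, for the $\liminf$, a coarea/averaging selection of a smooth level set followed by surgery. However, the pivotal claim in your $\liminf$ argument is false: smallness of $\C_\eps$ does \emph{not} force the substantial components of $\{u_\eps\approx 1\}$ to be ``linked by necks on which $u_\eps$ is close to $1$'', and in particular it does not make $E_\eps=\{u_\eps>t_\eps\}$ already connected. Consider $u_\eps$ consisting of two optimal-profile plateaus at height $1$ separated by a gap of width $\eps^\kappa$ on which $u_\eps=0$: then $d^{F_\eps(u_\eps)}$ between the plateaus is at most $\eps^\kappa$, so $\eps^{-\kappa}\C_\eps(u_\eps)\le |\Omega|^2$ and the total energy stays bounded, yet \emph{every} superlevel set is disconnected and no neck with $u_\eps$ near $1$ exists. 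The energy bound only yields a geodesic-distance estimate, and that is exactly how the paper proceeds: it proves $\dist^{F_\eps(u_\eps)}(U_{1,\eps},U_{i,\eps})\le 4C|\log\eps|^2\eps^\kappa$, extracts curves $\gamma_i$ with $\H^1(\gamma_i\setminus E_\eps)\le 8C|\log\eps|^2\eps^\kappa$ (using that $F_\eps(u_\eps)=1$ off $E_\eps$), and then \emph{modifies} the competitor --- re-adding the small components the curves traverse, fattening the curves into tubes $B_{\rho_\eps}(\gamma_i)\cc\Omega$ whose boundary length is $\le 3\,\H^1(\gamma_i\setminus E_\eps'')$, and smoothing the finitely many corners. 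You do concede at the very end that ``residual short gaps are bridged'' and defer to an adaptation of \cite{MR3590663}, but that concession contradicts your stated mechanism and outsources precisely the step the theorem is about.

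The bookkeeping you defer is also where your fixed-$\delta$ dichotomy (``pieces of measure $\ge\delta$'') is too weak: one must connect all components down to vanishing size, and the paper does this with the threshold $|U_{i,\eps}|\ge 1/|\log\eps|$, the isoperimetric bound controlling the mass of the discarded tail, the count $N_\eps\le|\Omega||\log\eps|$, and the observation that each retained component carries mass $\ge\frac1{2|\log\eps|}$ inside $\{u_\eps>1-\eps^s\}$; together these give a total added perimeter $O(|\log\eps|^3\eps^\kappa)=o(1)$ for \emph{every} $\kappa>0$. Two smaller slips: the transition region $\{\eps^s<u_\eps<1-\eps^s\}$ has measure $O(\eps^{1-2s})$ (from the bound $\eps/W(\eps^s)$), not $O(\eps^{2-2s})$; and your recovery estimate $\C_\eps(u_\eps^j)=O(\eps|\log\eps|)$ imposes the restriction $\kappa<1$, which is absent from the theorem --- the paper treats $\C_\eps$ of its recovery sequence as vanishing, and in any case only pairs with at least one point in the $O(\eps)$-thin shell $\{1-2\eps^s<u_\eps<1-\eps^s\}$ can contribute to $\C_\eps$, which sharpens your bound considerably.
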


In particular, if $\Omega$ is convex and $P(E) = \H^1(\partial E)$ for $E=\{u=1\}$, the $\Gamma$-limit is known to be 
\[
\overline{P_C} (E)= P(E) + 2\,St(E)
\]
due to Lemma \ref{lemma convex} and \cite[Theorems 3.4 and 3.7]{novaga-etal}.

\begin{proof}[Proof of the $\limsup$-inequality.]
This construction is classical and thus we only sketch the proof. For more detailed arguments concerning the Modica-Mortola functional, see \cite{modica:1987us}. Let $u\in BV(\overline\Omega, \{0,1\})$ and denote $E:=\{u=1\}$. We want to construct a sequence of phase-fields $u_\eps$ such that 
\[
\lim_{\eps\to 0}\E_\eps(u_\eps) = \overline{P^r_{C,cc,\Omega}}(E).
\]
Take a sequence of connected sets $E_n$ such that 
\[
E_n\cc\Omega, \qquad \partial E_n\in C^\infty, \qquad
E_n \stackrel{L^1}\longrightarrow E, \qquad P(E_n) \longrightarrow \overline{P_{C,cc,\Omega}^r}(E).
\]
For every $n\in \N$, we may pick $r_n$ such that the tubular neighbourhood
\[
U_n = \{x\in \R^n\:|\: \dist(x, \partial E_n) < r_n\}
\]
is diffeomorphic to $\partial E_n \times (-r_n, r_n)$ via the map
\[
\Phi:\partial E_n \times (-r_n, r_n) \to U_n, \qquad \Phi(x, t) = x + t\,\nu_x.
\]
Without loss of generality, we assume that the sequence $r_n$ is strictly monotone decreasing to zero. For $r_{n+1}^2 \leq \eps < r_n^2$, we insert the usual recovery sequence for $E_n$,
\[
u_\eps(x) = q\left(\frac{\sdist(x,\partial E_n)}\eps\right) \cdot\chi_\eps\,,
\]
where $q$ solves the $1$-dimensional cell problem
\[
q'' - W'(q) = 0, \qquad q(-\infty) = 0, \qquad q(0)= \frac12, \qquad  q(+\infty) = 1,
\]
$\sdist(x, \partial E_n) = \dist(x, E_n^c) - \dist(x, E_n)$ is the signed distance function from $\partial E_n$ chosen to be positive inside $E_n$ and $\chi_\eps$ is a cut-off function to ensure that $u_\eps=0$ close to $\partial\Omega$. Then also the set $\{u_\eps>1-\eps^s\}$ is diffeomorphic to $E_n$, thus connected, and $\C_\eps(u_\eps)\equiv 0$. It is well-known that
\[
\F_\eps(u_\eps) - P(E_n) \to 0
\]
as $\eps \to 0$ (where $n=n_\eps$ is the corresponding index),
so in total we have shown that $\E_\eps(u_\eps) \to \overline{P_{C,\Omega}} (E)$ as required.  
\end{proof}

\begin{proof}[Proof of the $\liminf$-inequality.]
{\bf Preliminaries and heuristics.}
Let $u_\eps\in W_0^{1,2}(\Omega)$ be a sequence of functions such that $u_\eps\to u$ in $L^1(\Omega)$. Without loss of generality we may assume that $\liminf_{\eps\to 0} \E_\eps(u_\eps) < \infty$. As for the Modica-Mortola functional, this implies that $u$ is the characteristic function of a set of finite perimeter in $\overline\Omega$, so 
we need to show that 
\[
\liminf_{\eps\to 0} \E_\eps(u_\eps) \geq \overline{P_{C,cc,\Omega}^r}(\{u=1\}).
\]
We denote $E:= \{u=1\}$.
Since the energy $\E_\eps(u_\eps)$ decreases when we truncate $u_\eps$ from above at $1$ and from below at $0$, we may assume that $u_\eps \in W^{1,2}_0(\Omega, [0,1])$, and by the density of smooth functions even that $u_\eps \in C^\infty_c(\Omega, [0,1))$. 

Let $0<\delta < \frac{c_0}2$ and consider the primitive function 
$
G(z) = \int_0^z\sqrt{2\,W(s)\,}\ds.
$
Using the co-area formula for $BV$-functions we obtain that
\begin{align*}
\int_\Omega \frac\eps2\,|\nabla u_\eps|^2 + \frac{W(u_\eps)}{\eps}\dx &\geq \int_\Omega \sqrt{2 W(u_\eps)}|\nabla u_\eps|\dx\\
	&= \int_\Omega |\nabla G(u_\eps)|\dx\\
	&= \int_{0}^{c_0} \int_\Omega \d|\nabla \chi_{\{G(u_\eps)>t\}}| \dt
\end{align*}
for $\chi_A$ the characteristic function of a set $A$, so there exists $t_\eps \in (\delta, c_0-\delta)$ such that 
\[
\int_\Omega \d |\nabla \chi_{\{G(u_\eps)>t_\eps\}}| \leq \frac{1}{c_0-2\delta}\int_\Omega \frac\eps2|\nabla u_\eps|^2+ \frac{W(u_\eps)}\eps\dx.
\]
Since almost all $t$ are regular values of $G(u_\eps)$, we can even pick $t_\eps$ such that $\{G(u_\eps)=t_\eps\}$ is a $C^\infty$-submanifold of $\R^2$. Since the level set is additionally closed and bounded, we see that $\{G(u_\eps)=t_\eps\}$ is a compact manifold -- in particular, $\{G(u_\eps)=t_\eps\}$ has only a finite number $M_\eps$ of connected components. Since $\nabla G(u_\eps)$ does not vanish on $\{G(u_\eps)=t_\eps\}$ by assumption and since $u_\eps =0 < t_\eps$ on $\partial\Omega$, we see that
\[
\{G(u_\eps)=t_\eps\} = \partial\{G(u_\eps)>t_\eps\}.
\]
 
Now, let us go through the heuristic of the proof: If we could show that $\{G(u_\eps)>t_\eps\}$ were connected, we would be done, arguing that 
\[
\{G(u_\eps)>t_\eps\} = \{u_\eps > G^{-1}(t_\eps)\} \stackrel{L^1}\longrightarrow \{u=1\}
\]
and then concluding that 
\begin{align*}
\overline{P^r_{C,cc,\Omega}}(E) &\leq \liminf_{\eps\to 0} P\big(\{G(u_\eps)>t_\eps\}\big)\\
	&\leq \liminf_{\eps\to 0} \frac{1}{c_0-2\delta}\int_\Omega \frac\eps2|\nabla u_\eps|^2+ \frac{W(u_\eps)}\eps\dx\\
	&\leq \frac{c_0}{c_0-2\delta} \liminf_{\eps\to 0} \E_\eps(u_\eps)
\end{align*}
for all $0<\delta<\frac{c_0}2$. Then, taking $\delta\to0$, we would obtain the $\Gamma-\liminf$-inequality
\[
\overline{P^r_{C,cc,\Omega}}(E)\leq \liminf_{\eps\to 0} \E_\eps(u_\eps).
\]
In general, there is no good reason for $\{G(u_\eps)>t_\eps\}$ to be connected since the super-level set is highly sensitive to very slight perturbations which are barely visible in the Modica-Mortola energy -- however, the energy contribution of $\C_\eps$ prevents the set $\{u_\eps>1-\eps^s\}$ from being `too disconnected', so we can take a slight modification of the set which barely changes area or perimeter, but makes it connected.

{\bf Step 1.} In this step, we show that 
\[
\lim_{\eps\to 0} E_\eps = E
\]
in the $L^1$-topology of open sets for all sets $E_\eps$ such that
\[
\{u_\eps>1-\eps^s\} \subset E_\eps \subset \{u_\eps>\eps^s\}.
\]
In particular we note that
\[
\{u_\eps > 1-\eps^s \} \subset \{u_\eps> G^{-1}(t_\eps)\} \subset \{u_\eps>\eps^s\}
\]
since $\delta< t_\eps<c_0-\delta$, so $G^{-1}(t_\eps)$ is bounded away from $0$ and $1$. Second, we note that 
\[
|\{\eps^s < u_\eps < 1-\eps^s\}| \leq \frac{\eps}{W(\eps^s)}\int_\Omega \frac{W(u_\eps)}\eps\dx \leq \frac{C \eps}{\eps^{2s}} = C\,\eps^{1-2s}\to 0,
\]
where $C$ is an energy bound uniform in $\eps$, so that any set containing $\{u_\eps>1-\eps^s\}$ and contained in $\{u_\eps>\eps^s\}$ has the same $L^1$-limit (if one of them exists). Here we use that $W$ vanishes quadratically at the potential wells, for other double-well potentials, other $s$ may be admissible. Now observe that
\begin{align*}
\| u_\eps - \chi_{\{u_\eps>1-\eps^s\}}\|_{L^1} &= \int_{\{u_\eps> 1-\eps^s\}} |u_\eps - 1|\dx + \int_{\{\eps^s < u_\eps < 1-\eps^s\}}|u_\eps|\dx + \int_{\{u_\eps<\eps^s\}}|u_\eps|\dx\\
	&\leq \eps^s\cdot|\Omega| + C\,\eps^{1-2s} + \eps^s\cdot |\Omega| \to 0
\end{align*}
so that $u_\eps$ and $\chi_{\{u_\eps > 1-\eps^s\}}$ have the same $L^1$-limit $u = \chi_E$, in other words
\[
\lim_{\eps\to 0} \{u_\eps > 1-\eps^s\} = E
\]
in the $L^1$-topology.

{\bf Step 2.} In this step, we eliminate the connected components of the approximating set which we deem too small to matter.
Denote the connected components of $\{G(u_\eps)>t_\eps\}$ by $U_{i,\eps}$, $i = 1, \dots, M_\eps$, where the components are ordered by volume:
\[
|U_{1,\eps}| \:\geq\: \dots\:\geq |U_{M_\eps,\eps}|.
\]
Denote
\[
E_\eps = \{G(u_\eps)>t_\eps\} = \bigcup_{i=1}^{M_\eps} U_{i, \eps}.
\]
 Applying the iso-perimetric inequality, we observe that for $N<M_\eps$ we have
\begin{align*}
\left|\bigcup_{i=N}^{M_\eps} U_{i,\eps}\right| &= \sum_{i=N}^{M_\eps} \left|U_{i,\eps}\right|&
	&\leq \sqrt{|U_{N,\eps}|\,}\sum_{i=N}^{M_\eps} \sqrt{|U_{i,\eps}|\,}&
	&\leq \sqrt{|U_{N,\eps}|\,}\sum_{i=N}^{M_\eps} \sqrt{\frac1{4\pi} \Per( U_{i,\eps})^2}\\
	&= \sqrt{\frac {|U_{N,\eps}|}{4\pi}} \sum_{i=N}^{M_\eps} \Per( U_{i,\eps})&
	&= \sqrt{\frac {|U_{N,\eps}| }{4\pi}}\:\Per \left(\bigcup_{i=N}^{M_\eps} U_{i,\eps}\right)&
	&\leq \sqrt{|U_{N,\eps}|}\: \frac{\Per(E_\eps)}{\sqrt{4\pi}},
\end{align*}
so if $U_{N,\eps}$ carries little mass, then all the remaining components together have little mass as well.
The identity
\[
\sum_{i=N}^{M_\eps} \Per( U_{i,\eps}) = \Per \left(\bigcup_{i=N}^{M_\eps} U_{i,\eps}\right)
\]
holds easily since $E_\eps$ is a smooth set whose boundary has finitely many connected components.
Choose $N_\eps\leq M_\eps$ such that 
\[
\left| U_{i, \eps}\right| \geq \frac1{|\log\eps|}\qquad\forall\ i= 1, \dots, N_\eps, \qquad |U_{N_\eps+1, \eps}| < \frac1{|\log\eps|}.
\]
It may happen that $N_\eps = M_\eps$ -- this will rather simplify the proof, so we do not consider that case. We take 
\[
E_\eps' = \bigcup_{i=1}^{N_\eps} U_{i,\eps}
\]
and note that $P(E_\eps') \leq P(E_\eps)$ (since we only remove boundary components) and still $E_\eps' \to E$ in $L^1$. 

\begin{figure}[ht!]
\begin{center}
\includegraphics[]{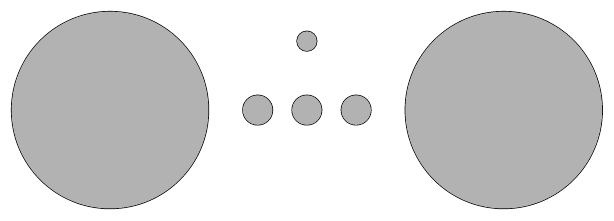}\\
\includegraphics[]{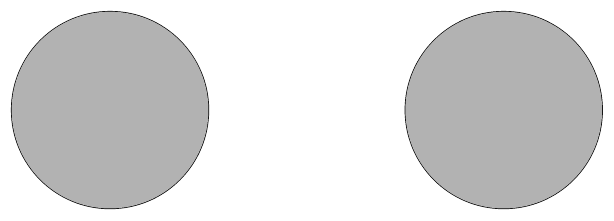}\\
\includegraphics[]{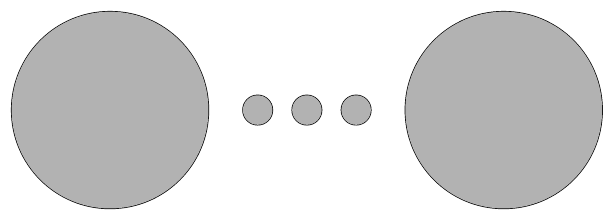}\\
\includegraphics[]{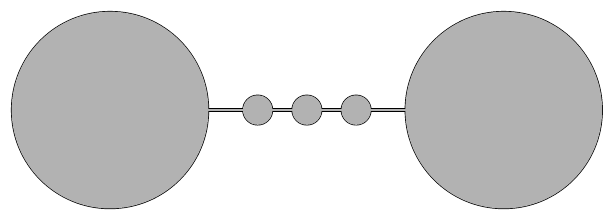}\\
\includegraphics[]{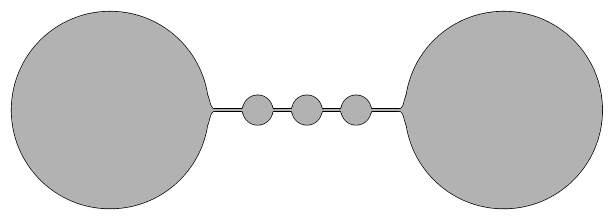}
\end{center}
\caption{Schematic illustration of the modification process. From top to bottom $E_\eps$ (all components), $E_\eps'$ (only the large components), $E_\eps''$ (also the small components on connecting curves), $E_\eps'''$ (same components, connected with tubes), $E_\eps''''$ (smoothed out version of $E_\eps'''$).}
\end{figure}

{\bf Step 3.} In this step, we show that it is possible to connect all the components $U_{i,\eps}$ of $E_\eps'$ without changing the $L^1$-limit or increasing the perimeter by much. We note that the number of large connected components cannot increase too quickly in $\eps$ since
\[
\frac{N_\eps}{|\log\eps|} \leq N_\eps \,|U_{N_\eps,\eps}| \leq \sum_{i=1}^{N_\eps}|U_{i,\eps}| = |E_\eps'| \leq |\Omega|\qquad\Ra\qquad N_\eps \leq |\Omega|\,|\log\eps|.
\]
Furthermore, we note that for $i\leq N_\eps$
\[
\big|U_{i,\eps} \cap \{u_\eps>1-\eps^s\}\big| \geq |U_{i,\eps}| - \big|\{x\in \Omega\:|\:\eps^s < u_\eps < 1-\eps^s\}\big| \geq |\log\eps|^{-1} - C\,\eps^{1-2s} \geq \frac1{2\,|\log\eps|}
\]
for all small enough $\eps>0$, so that within each connected component $U_{i,\eps}$ we have a large volume on which $u_\eps$ is between $1-\eps^s$ and $1$. 
Now let $2\leq i\leq N_\eps$. Then we know that
\begin{align*}
C&\geq \eps^{-\kappa}\int_\Omega\int_\Omega \beta_\eps(u_\eps(x))\:\beta_\eps(u_\eps(y))\:d^{F_\eps(u_\eps)}(x,y)\dx\dy\\
	&\geq \eps^{-\kappa}\int_{U_{i,\eps}\cap \{u_\eps>1-\eps^s\}}\int_{U_{1,\eps}\cap\{u_\eps>1-\eps^s\}} d^{F_\eps(u_\eps)}(x,y)\dx\dy\\
	&=\eps^{-\kappa}\,\big|U_{i,\eps}\cap \{u_\eps>1-\eps^s\}\big|\: \big|U_{1,\eps}\cap \{u_\eps>1-\eps^s\}\big|\:\dist^{F_\eps(u_\eps)}(U_{1,\eps}, U_{i,\eps})\\
	&\geq \frac{1}{4\,|\log\eps|^2 \eps^\kappa}\:\dist^{F_\eps(u_\eps)}(U_{1,\eps}, U_{i,\eps})
\end{align*}
from the energy bound, so 
\[
\dist^{F_\eps(u_\eps)}(U_{1,\eps}, U_{i,\eps}) \leq 4\,C\,|\log\eps|^2\eps^\kappa.
\]
Here we denoted as usual
\[
\dist^{F(u_\eps)}(A, B) = \inf\left\{d^{F(u_\eps)}(x,y)\:|\:x\in A, y\in B\right\}.
\]
This means that there exist two points $x\in U_{1,\eps}$, $y\in U_{i,\eps}$ and a Lipschitz curve $\gamma_i$ from $x$ to $y$ inside $\Omega$ such that
\[
\int_{\gamma_i} F_\eps(u_\eps)\d\H^1 \leq 8\,C\,|\log\eps|^2\eps^\kappa.
\]
Without loss of generality, we may assume that the curve is $C^\infty$-smooth, and we observe that in particular
\[
\H^1 \left(\gamma_i \setminus \{G(u_\eps)>t_\eps\}\right) = \int_{\gamma_i \setminus \{G(u_\eps)>t_\eps\}} F_\eps(u_\eps)\d\H^1 \leq 8\,C\,|\log\eps|^2\eps^\kappa.
\]
Making the curve potentially shorter, we may assume that it has a unique point of entry and point of exit from every connected component of $E_\eps=\{G(u_\eps)>t_\eps\}$ since every connected open subset of $\R^n$ is also path-connected. 
If $\gamma_i$ happens to pass through a connected component of $E_\eps$ which we had eliminated before, we need to add it back:
\[
E_\eps'' = \bigcup_{i=1}^{N_\eps}\left(U_{i,\eps}\cup \bigcup_{U_{j,\eps}\cap \gamma_i\neq\emptyset} U_{j,\eps}\right)
\]
and note that still $E_\eps'' \to E$ and $P(E_\eps'') \leq P(E_\eps)$. 
Now we know that 
\[
E_\eps'' \cup \bigcup_{i=1}^{N_\eps}\left(\gamma_i\setminus E_\eps''\right)
\]
is connected (but not open). Since $\gamma_i$ is $C^\infty$-smooth and only meets finitely many connected components, there exists $\rho_\eps>0$ such that the tubular neighbourhood 
\[
B_{\rho_\eps}(\gamma_i) = \{ x\in \R^2\:|\: \dist (x, \gamma_i)< \rho_\eps\}
\]
is compactly contained in $\Omega$, such that
\[
\H^1\left( \big(\partial B_{\rho_\eps}(\gamma_i)\big) \setminus E_\eps''\right) \leq 3\, \H^1(\gamma_i\setminus E_\eps'')
\]
and such that the tubes only add a negligible amount of area. Now, using that we only had at most $O(|\log\eps|)$ tubes to add which were all small compared to $\frac1{|\log\eps|}$, we observe that
\[
E_\eps''' = E_\eps'' \cup \bigcup_{i=1}^{N_\eps} B_\eps(\gamma_i)
\]
is open, connected, converges to $E$ as $\eps\to 0$ and satisfies 
\[
\liminf_{\eps\to 0} \left(\frac{c_0}{c_0-2\delta} \,\E_\eps(u_\eps) - P(E_\eps''')\right) \geq 0.
\]
We also note that $E_\eps'''$ has a smooth boundary (if we choose $\rho_\eps$ small enough) except at the finitely many points where the tubular neighbourhoods hit the connected components. We can smooth those corners out locally to a set $E_\eps''''$ with the exact same properties otherwise, but a boundary which is actually $C^\infty$-smooth. This proves the theorem.
\end{proof}

\subsection{Extensions and further observations}\label{section extensions}

As for the pure Modica-Mortola functional, a compactness result holds.

\begin{remark}
If $u_\eps \in W^{1,2}_0(\Omega)$ is a sequence such that $\liminf_{\eps\to 0}\E_\eps(u_\eps)<\infty$, then there exists a subsequence $\eps\to 0$ and a set of finite perimeter $E\subset \Omega$ such that
\[
u_\eps \Lto \chi_E, \qquad P(E) \leq \overline{P_C(E)} \leq \overline{P_{C,cc,\Omega}^r}(E) \leq \liminf_{\eps\to 0} \E_\eps(u_\eps).
\]
In fact, the convergence holds in $L^p(\Omega)$ for all $p<\infty$.
\end{remark}

Let us quickly collect a few thoughts on how similar ideas may be used in related problems. If we define the energy functional $\E_\eps'$ formally given by the same formula on $W^{1,2}(\Omega)$ instead of $W^{1,2}_0(\Omega)$, i.e.\
\[
\E_\eps':W^{1,2}(\Omega)\to \R, \quad \E_\eps'(u) =\frac1{c_0}\int_\Omega\frac\eps2\,|\nabla u|^2 + \frac{W(u)}\eps\dx + \eps^{-\kappa}\,\C_\eps(u)
\]
for some $\kappa>0$, we get a connected relaxation of the relative perimeter.

\begin{theorem}\label{theorem relative perimeter}
Assume that $\Omega\subset \R^2$ is a bounded Lipschitz domain. Then
\[
\left[\Gamma(L^1)-\lim_{\eps\to 0} \E_\eps'\right] (u) = \begin{cases}\overline{P_{C,rel,\Omega}^r}(\{u=1\}) &\text{if } u\in BV(\overline\Omega, \{0,1\})\\ +\infty &\text{else}\end{cases}
\]
where
\[
\overline{P_{C,rel,\Omega}^r}(E) = \inf\left\{\liminf_{n\to\infty} P_\Omega(E_n) \:\bigg|\:E_n\subseteq \Omega, \quad (\partial E_n)\cap \Omega\in C^\infty, \quad E_n\text{ connected}, \quad E_n\Lto E\right\}.
\]
\end{theorem}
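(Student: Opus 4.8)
The plan is to run the two halves of the proof of Theorem \ref{theorem main} essentially verbatim, replacing the full perimeter $P$ by the relative perimeter $P_\Omega(\,\cdot\,)=P(\,\cdot\,;\Omega)=\H^1(\partial^*\cdot\,\cap\Omega)$ and the Dirichlet space $W^{1,2}_0(\Omega)$ by $W^{1,2}(\Omega)$. Since we no longer force a transition layer at $\partial\Omega$, the approximating sets are allowed to touch the boundary and Lemma \ref{lemma convex} plays no role; the relaxation is taken directly over connected sets $E_n\subseteq\Omega$ with $(\partial E_n)\cap\Omega\in C^\infty$. The one bookkeeping point is the geodesic distance in $\C_\eps$: since $u_\eps$ is only defined on $\Omega$, I read $d^{F_\eps(u_\eps)}$ with competitor curves constrained to $\overline\Omega$, which is the only convention for which the weight $F_\eps(u_\eps)$ is defined along the whole curve. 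With this convention the disconnectedness penalty behaves as before.

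For the $\limsup$-inequality I would take, for $u=\chi_E$, a recovery sequence of connected sets $E_n\subseteq\Omega$ with $(\partial E_n)\cap\Omega$ smooth, $E_n\Lto E$ and $P_\Omega(E_n)\to\overline{P^r_{C,rel,\Omega}}(E)$, and insert the classical Modica--Mortola optimal profile across the interior interface $(\partial E_n)\cap\Omega$, leaving $u_\eps$ locally constant (equal to $1$ on $E_n$, to $0$ on $\Omega\setminus\overline{E_n}$) away from it; in particular no transition layer is placed along $\partial\Omega$, so only $(\partial E_n)\cap\Omega$ is charged and $\F_\eps(u_\eps)\to P_\Omega(E_n)$. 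As in Theorem \ref{theorem main} the super-level set $\{u_\eps>1-\eps^s\}$ is diffeomorphic to $E_n$ and hence connected, so $\C_\eps(u_\eps)\equiv0$ and the extra term drops out. The genuinely new point is the contact set where $\partial E_n$ meets the Lipschitz boundary $\partial\Omega$; there one uses the Lipschitz regularity of $\Omega$ (a local bi-Lipschitz flattening, or a reflection/approximation argument) to keep the energy created near the contact set negligible as $\eps\to0$. This is classical for the relative perimeter and I would only sketch it.

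For the $\liminf$-inequality I would repeat the co-area selection: choosing $0<\delta<\frac{c_0}2$ and the primitive $G$, one finds a regular value $t_\eps\in(\delta,c_0-\delta)$ with $\{G(u_\eps)=t_\eps\}\cap\Omega$ a smooth hypersurface and
\[
P_\Omega\big(\{G(u_\eps)>t_\eps\}\big)\ \le\ \frac1{c_0-2\delta}\int_\Omega \frac\eps2|\nabla u_\eps|^2+\frac{W(u_\eps)}\eps\dx,
\]
the only change being that the surface may now reach $\partial\Omega$ and is charged only inside $\Omega$. Steps 1--3 of Theorem \ref{theorem main} then transfer: the $L^1$-limit of any set trapped between $\{u_\eps>1-\eps^s\}$ and $\{u_\eps>\eps^s\}$ is $E$; small components are discarded; and the heavy components are joined using the $\C_\eps$-bound, which as before forces $\dist^{F_\eps(u_\eps)}(U_{1,\eps},U_{i,\eps})\to0$, yields short connecting curves $\gamma_i\subseteq\overline\Omega$ along which $u_\eps>1-\eps^s$ off a set of vanishing $\H^1$-measure, and lets us glue the components with thin tubes (intersected with $\Omega$) of negligible relative perimeter, smoothing the interior corners at the end. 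Because tubes and connecting curves running along $\partial\Omega$ cost no relative perimeter, the gluing is if anything easier than in the Dirichlet case.

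The main obstacle is the discarding step (Step 2) for components that touch $\partial\Omega$. There the Modica--Mortola energy controls only the relative perimeter $P_\Omega(U_{i,\eps})=\Per(U_{i,\eps})-\H^1(\partial U_{i,\eps}\cap\partial\Omega)$, not the full perimeter, so the plain iso-perimetric inequality $|U|\le\frac1{4\pi}\Per(U)^2$ is unavailable and must be replaced by the relative iso-perimetric inequality: for a bounded Lipschitz domain $\Omega$ there is a constant $C_\Omega$ with $\min\big(|U|,|\Omega\setminus U|\big)\le C_\Omega\,P_\Omega(U)^2$ for every set $U$ of finite perimeter in $\Omega$. For the low-volume tail components the minimum is $|U_{i,\eps}|$, so this reproduces the estimate $\big|\bigcup_{i\ge N}U_{i,\eps}\big|\le C_\Omega\sqrt{|U_{N,\eps}|}\,P_\Omega(E_\eps)$ and the rest of Step 2 goes through. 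This relative iso-perimetric inequality is the main new technical ingredient, and together with the contact-set construction in the $\limsup$-part it is where the Lipschitz hypothesis on $\Omega$ is used.
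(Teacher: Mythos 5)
Your proposal follows essentially the same route as the paper, whose own ``proof'' is only a two-sentence sketch stating that the $\liminf$-inequality proceeds as in Theorem \ref{theorem main} using the density of smooth functions in $W^{1,2}(\Omega)$ (the $H=W$ theorem) together with the relative iso-perimetric inequality valid in Lipschitz domains, and that the recovery sequence is constructed as before provided $\partial\Omega$ is not too wild. You correctly identified the relative iso-perimetric inequality in Step 2 and the boundary-contact issue in the recovery construction as the two points where the Lipschitz hypothesis enters, which is precisely the content of the paper's remark, worked out in somewhat more detail.
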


The relative perimeter $P_\Omega(E)$ of $E\subseteq \Omega$ is defined as $P_\Omega(E) = |D\chi_E|(\Omega)$ while the full perimeter of a set $E\subset \Omega$ is $P(E) = P_{\R^2}(E) = |D\chi_E|(\overline\Omega) = |D\chi_E|(\R^2)$, i.e.\ the relative perimeter does not count the part of the boundary of $E$ that lies inside the boundary of $\Omega$, see e.g.\ \cite[Definition 3.35]{ambrosio2000functions}.

The proof of the $\liminf$-inequality is the same as that of Theorem \ref{theorem main}, using the famous result that $H=W$ (i.e.\ that smooth functions on $\R^n$ lie dense in $W^{1,2}(\Omega)$) and the relative iso-perimetric inequality which holds in all domains where the Sobolev inequality holds (in particular, Lipschitz domains). The construction of the recovery sequence goes through as before, assuming that the boundary of $\Omega$ is not too wild.

Note that a bounded set $E\subset\R^2$ is simply connected if and only if both $E$ and $\R^2\setminus E$ are connected. This leads us to investigate a modified functional 
\[
\E_\eps'':W^{1,2}_0(\Omega)\to \R, \quad \E_\eps''(u) = \F_\eps(u) + \eps^{-\kappa}\big[\C_\eps^{(1)}(u) + \C_\eps^{(2)}(u)\big]
\]
where
\begin{align*}
\C_\eps^{(1)}(u) &= \int_\Omega\int_\Omega \beta_\eps\big(u(x)\big)\:\beta_\eps\big(u(y)\big)\:d^{F_\eps(u)}(x,y)\dx \dy\\
\C_\eps^{(2)}(u) &= \int_\Omega\int_\Omega \beta_\eps\big(1- u(x)\big)\:\beta_\eps\big(1- u(y)\big)\:d^{F_\eps(1-u)}(x,y)\dx \dy
\end{align*}
i.e.\ $\C_\eps^{(1)}$ as before serves to keep the phase $\{u \approx 1\}$ approximately connected whereas $\C_\eps^{(2)}$ keeps the phase $\{1-u \approx 1\} = \{u\approx 0\}$ connected. We have the following result.

\begin{theorem}\label{theorem simply connected}
\[
\left[\Gamma(L^1)-\lim_{\eps\to 0} \E_\eps''\right] (u) = \begin{cases}\overline{P_{sc,cc,\Omega}^r}(\{u=1\}) &\text{if } u\in BV(\overline\Omega, \{0,1\})\\ +\infty &\text{else}\end{cases}
\]
where
\[
\overline{P_{sc,cc,\Omega}^r}(E) = \inf\left\{\liminf_{n\to\infty} P_\Omega(E_n) \:|\:E_n\subset \Omega, \quad E_n\in C^\infty, \quad E_n \text{ simply connected}, \quad E_n\Lto E\right\}.
\]
\end{theorem}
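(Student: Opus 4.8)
The plan is to adapt the proof of Theorem \ref{theorem main} essentially verbatim, exploiting the characterization recalled just above the statement: a bounded planar set is simply connected precisely when both it and its complement are connected. Accordingly I treat $\C_\eps^{(1)}$ and $\C_\eps^{(2)}$ as dual penalties, the first controlling connectedness of the phase $\{u\approx1\}$ and the second connectedness of the phase $\{u\approx0\}$, and I carry out two dual surgeries on a suitable level set of $u_\eps$.

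For the $\limsup$-inequality I would take a recovery sequence of smooth simply connected sets $E_n\subset\Omega$ with $E_n\Lto E$ and $P_\Omega(E_n)\to\overline{P^r_{sc,cc,\Omega}}(E)$, and insert the usual one-dimensional profile $q(\sdist(\cdot,\partial E_n)/\eps)$ cut off near $\partial\Omega$ as in Theorem \ref{theorem main}. Because $E_n$ is simply connected, the superlevel set $\{u_\eps>1-\eps^s\}$ is diffeomorphic to $E_n$ and the sublevel set $\{u_\eps<\eps^s\}$ is diffeomorphic to $\Omega\setminus\overline{E_n}$, which is connected since $E_n$ has no holes; hence $\C_\eps^{(1)}(u_\eps)=\C_\eps^{(2)}(u_\eps)=0$ and a diagonal argument gives $\E_\eps''(u_\eps)=\F_\eps(u_\eps)\to P_\Omega(E_n)$. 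The only point requiring the boundary of $\Omega$ to be not too wild is the connectedness of $\Omega\setminus\overline{E_n}$, which I would handle exactly as indicated for Theorem \ref{theorem relative perimeter}.

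For the $\liminf$-inequality I would run the preliminary coarea step of Theorem \ref{theorem main} unchanged, selecting a regular value $t_\eps\in(\delta,c_0-\delta)$ so that $E_\eps:=\{G(u_\eps)>t_\eps\}$ is a compact $C^\infty$ set with finitely many components and $P(E_\eps)\le\frac{c_0}{c_0-2\delta}\,\F_\eps(u_\eps)$, and then perform two dual modifications. First, exactly as in Steps 2 and 3 of Theorem \ref{theorem main}, I use the bound on $\eps^{-\kappa}\C_\eps^{(1)}(u_\eps)$ to discard the components of $E_\eps$ of volume below $1/|\log\eps|$ and to join the remaining (at most $O(|\log\eps|)$) ones to the largest by adding thin tubes of material along short curves $\gamma_i$ with $\int_{\gamma_i}F_\eps(u_\eps)\dH^1$ small. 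Second, I apply the identical argument to $1-u_\eps$ and $\C_\eps^{(2)}$: the bounded complement components (``holes'') of $\Omega\setminus\overline{E_\eps}$ play the role of the $U_{i,\eps}$, the largest complement component $V_0$ (containing the collar $\{u_\eps=0\}$ near $\partial\Omega$) plays the role of $U_{1,\eps}$, and smallness of $\eps^{-\kappa}\C_\eps^{(2)}(u_\eps)$ forces $\dist^{F_\eps(1-u_\eps)}(V_j,V_0)\le 4C|\log\eps|^2\eps^\kappa$ for every large hole $V_j$. I then fill in the holes of volume below $1/|\log\eps|$ (which only decreases perimeter and changes area negligibly) and connect each remaining hole to $V_0$ by excising a thin tube of material around a short curve $\sigma_j$ with $\int_{\sigma_j}F_\eps(1-u_\eps)\dH^1$ small. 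Here the level choice makes the duality exact: since $t_\eps$ is bounded away from $0$, for small $\eps$ one has $u_\eps\ge G^{-1}(t_\eps)>2\eps^s$ on $E_\eps$, so $F_\eps(1-u_\eps)\equiv1$ on $E_\eps$ and $\H^1(\sigma_j\cap E_\eps)\le\int_{\sigma_j}F_\eps(1-u_\eps)\dH^1$ is small, precisely dual to the estimate $\H^1(\gamma_i\setminus E_\eps)\le\int_{\gamma_i}F_\eps(u_\eps)\dH^1$ of Theorem \ref{theorem main}. Each excised channel therefore adds only $O(\H^1(\sigma_j\cap E_\eps))$ to the perimeter. Smoothing the finitely many corners as before yields a smooth set $\tilde E_\eps\Lto E$ with $\liminf_{\eps\to0}\big(\tfrac{c_0}{c_0-2\delta}\E_\eps''(u_\eps)-P(\tilde E_\eps)\big)\ge0$ that is connected with connected complement in $\R^2$, hence simply connected; letting $\delta\to0$ closes the argument.

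The main obstacle is the compatibility of the two surgeries: I must ensure that excising the channels $\sigma_j$ does not re-disconnect the set produced by adding the tubes $\gamma_i$, and that the two families of thin tubes do not interfere. The clean route is topological. After the first surgery $E_\eps$ is a connected planar domain whose boundary consists of the outer curve together with finitely many hole curves, and cutting a simple interior arc from one hole curve to $V_0$ reduces the number of boundary components by one while keeping the domain connected; performing these cuts one hole at a time, always to $V_0$ and along disjoint channels of radius $\rho_\eps$ small and in generic position, terminates in a connected set with connected complement. The remaining work is the dual bookkeeping, namely that the total volume of the discarded small holes and of all excised channels tends to zero; this follows from the isoperimetric inequality of Step 2 of Theorem \ref{theorem main} applied to the complement, together with the $O(|\log\eps|)$ bound on the number of large holes obtained exactly as for the large components.
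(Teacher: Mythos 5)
Your proposal follows essentially the same route as the paper: its proof also reuses the $\liminf$ machinery of Theorem \ref{theorem main} twice, with $\C^{(1)}_\eps$ joining the components of the regular level set $E_\eps$ and $\C^{(2)}_\eps$ joining the complement components, and it resolves the interference between the two surgeries by the same planar-topology fact you invoke, namely that $\partial E_\eps$ is a finite disjoint union of smooth circles and that a loop-free arc with a single entry and exit point joining distinct boundary components disconnects neither the domain nor its complement. The only cosmetic difference is where the topological care is placed: the paper chooses the connecting curves of the first surgery so that no complement component is ever split (merging components iteratively), whereas you allow a rougher first surgery and then cut every remaining hole open towards the outer component $V_0$ in the second -- your dual estimate $F_\eps(1-u_\eps)\equiv 1$ on $E_\eps$ and the isoperimetric bookkeeping for the complement match the paper's (sketched) argument.
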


\begin{proof}
The proof of the $\limsup$-inequality proceeds in the usual way, so we will only look at the necessary modifications for the $\liminf$-inequality. The boundary of the approximating set $E_\eps = \{G(u_\eps)>t_\eps\}$ is a compact embedded $C^\infty$-submanifold of $\R^2$, so the union of finitely many circles which do not touch each other. In particular, if $U_{i, \eps}$ is a connected component of $E_\eps$, it is only in contact with {\em one} connected component of $\overline {E_\eps}^c$. 

This means that if we add a $C^\infty$-curve $\gamma_{i,j,\eps}$ to $E_\eps$ which connects two connected components $U_{i,\eps}$ and $U_{j,\eps}$ in such a way that it has one entry and one exit point to $\overline{E_\eps}$ and no loops, then every connected component of $\overline {E_\eps}^c$ will still be connected after the modification. The same is true after slightly fattening the curve. A simple proof of this fact can be constructed using path-connectedness and the regularity of the approximating sets to look at tubular neighbourhoods of the boundaries and the connecting curve.

Thus we may carefully construct connecting curves $\gamma_{i,j,\eps}$ between components which have no loops and connect components in such a way that there always is only one entry and one exit point for a component, also proceeding iteratively and merging components to be the same after they have been connected before connecting the next one.

After constructing $E_\eps''''$ in such a way, we can modify the complement $\overline{E_\eps''''}^c$ in the same way to make it connected without changing the fact that $E_\eps''''$ is connected, creating a new set $\widehat E_\eps$. By construction, we again barely changed the perimeter and know that both $\widehat E_\eps$ and the complement of its closure are connected. Since $\widehat E_\eps$ is also $C^\infty$-smooth, it follows that also $\widehat E_\eps^c$ is connected, which means that $\widehat E_\eps$ is simply connected. 

As before, this concludes the proof.
\end{proof}

Of course it is possible to combine the previous two extensions. We conclude this section with two notes on possible applications of our approximation results.

\begin{remark}
In order to make use of our functional for image segmentation applications, it is of course possible to add a fidelity term of the form
\[
\F_\mathrm{fid}(u) = \int_{\Omega} \frac{1}{2}\Phi(x)|u(x)-g(x)|^2\,\dx
\]
for a given image $g\colon \Omega \to [0,1]$ and local fidelity prefactor $\Phi\colon \Omega \to [0,1]$. This term simply carries over to the $\Gamma$-limit proved above.
\end{remark}

\section{Numerical Results}\label{section numerics}
As in~\cite{Dondl:2018wb}, we consider a fully discrete gradient flow of the functional
\[
\E_\mathrm{im}(u) = \F_\eps(u) + \eta\,\C_\eps(u) + \delta\F_\mathrm{fid}(u).
\]
The two wells of the function $W$ in $\F_\eps$ are at $0$ and $1$. We only consider a fixed $\eps=5\cdot 10^{-3}$ for these numerical experiments and the functions $\beta_\eps$ and $F_\eps$ used to define $\C_\eps$ are given by
\begin{align*}
\beta_\eps(s) &= \begin{cases}
0 & s\le 1-\alpha \\
\frac{c_1}{2}(s-1+\alpha)^2  & s>1-\alpha
\end{cases} \\
\quad\text{and}\\
F_\eps(s) &= \begin{cases} 
\frac{1}{2}(s-1+\alpha)^2\cdot  & s<1-\alpha\\
0 & s \le 1-\alpha,
\end{cases}
\end{align*}
respectively, with $c_1$ chosen such that $\int_\alpha^1 \beta_\eps(s) \ds = 1$. The value of $\alpha$ for all numerical examples is $0.35$, the value for $\eta$ is $300$ for all experiments where the topological penalty is turned on. The value for $\delta$ varies somewhat from experiment to experiment.

For the finite element implementation of the discrete gradient flow, we use the algorithm described in detail in~\cite{Dondl:2018wb} and a time-step size of $\tau=5\cdot 10^{-8}$. The basic idea is to first separate the set $\{u>1-\alpha\}$ into connected components and then calculate their distances (and the respective variations, both modulo a mesh-dependent factor), by using Dijkstra's algorithm~\cite{dijkstra1959note}. All computations are done on a unit square made up of approximately $2.3\cdot 10^4$ P1 triangle elements. Some numerical experiments are already presented in~\cite{Dondl:2018wb}, we chose to not repeat those here.

\begin{figure}[ht!]\begin{center}
\raisebox{-0.5\height}{\includegraphics[height=3.7cm]{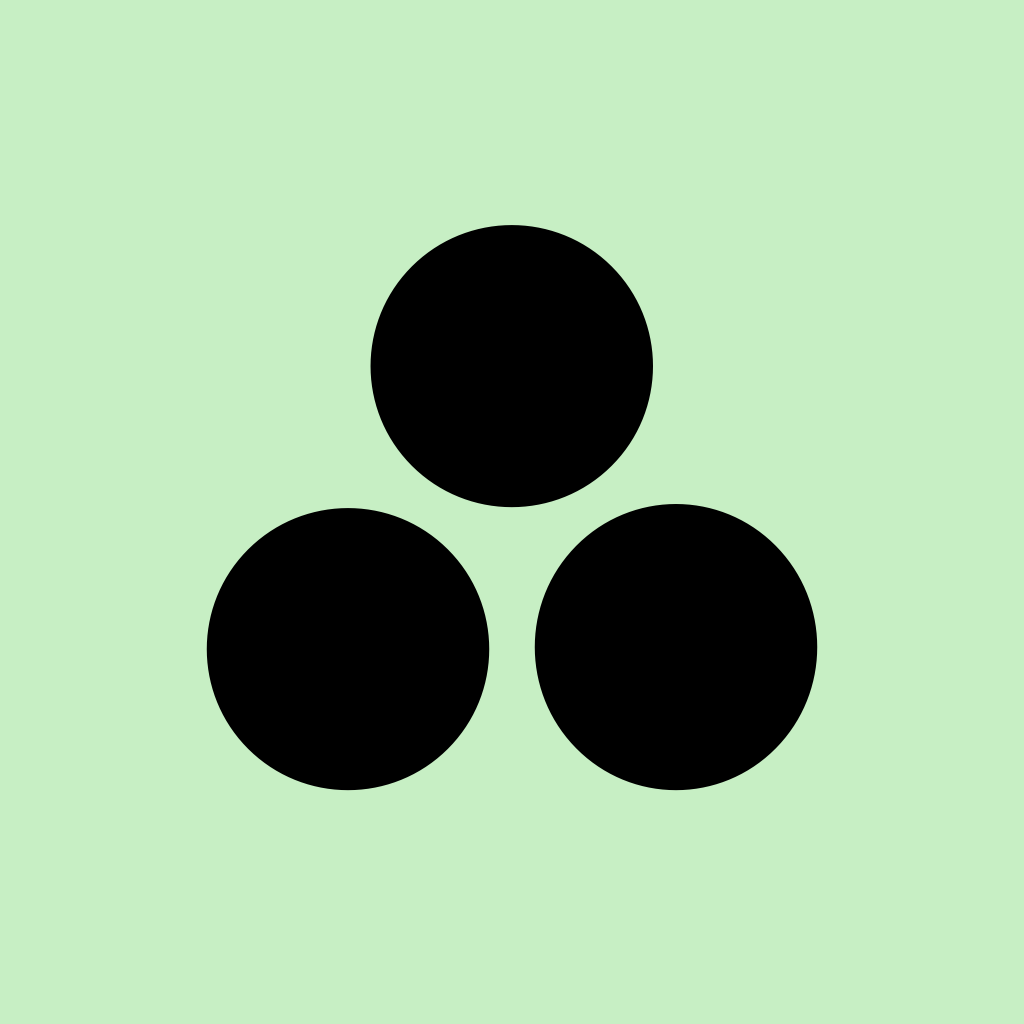}}
\raisebox{-0.5\height}{\includegraphics[height=3.7cm, clip, trim = 36.98cm 18.21cm 36.98cm 18.21cm]{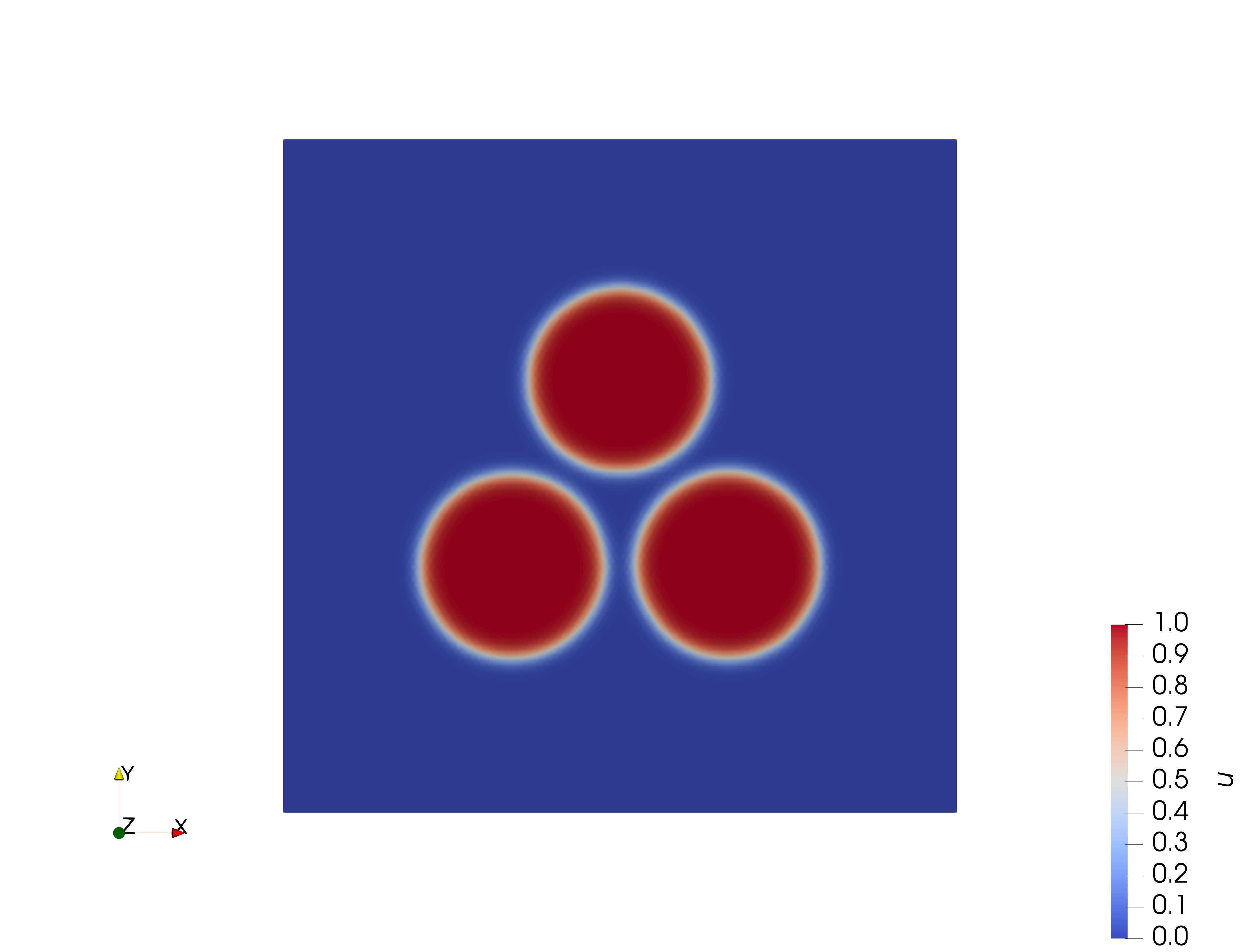}}
\raisebox{-0.5\height}{\includegraphics[height=3.7cm, clip, trim = 36.98cm 18.21cm 36.98cm 18.21cm]{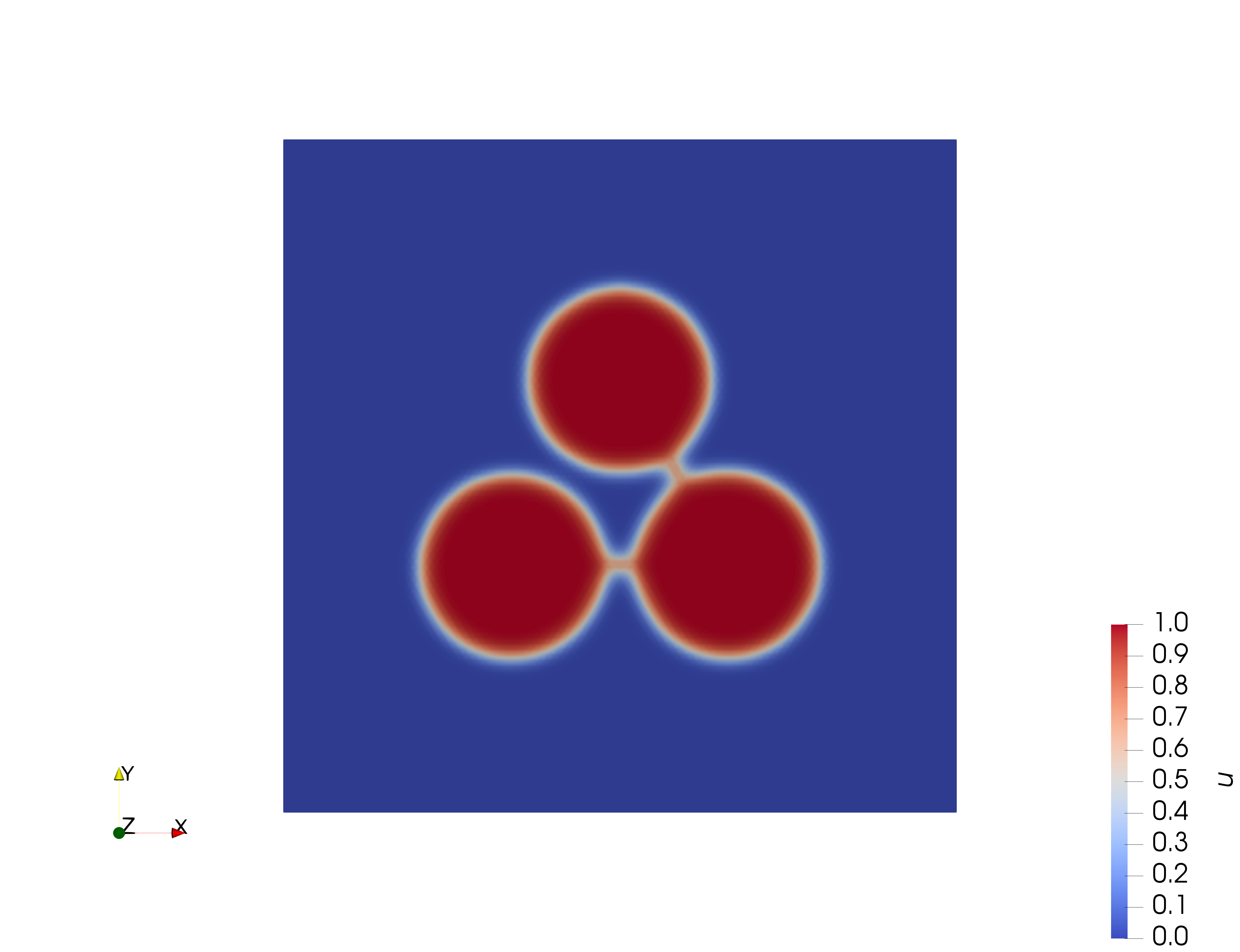}}
\raisebox{-0.5\height}{\includegraphics[height=2.5cm]{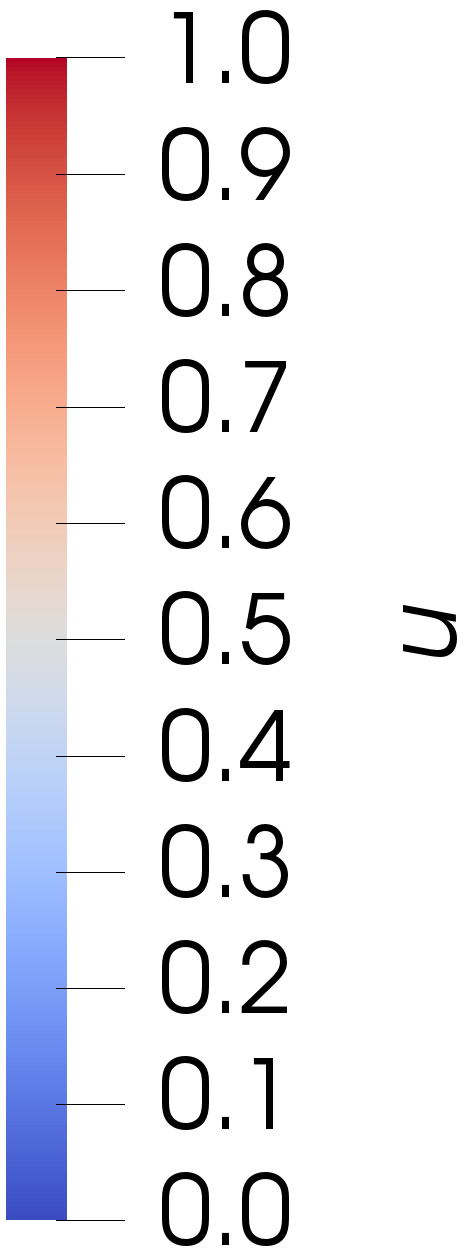}}
\caption{Results for a numerical example using an image with close disks on the unit square. From left to right: given image $g$ (black corresponding to the value $+1$, pale green to $0$, both with $\Phi=1$), stationary state $u$ without disconnectedness penalty, stationary state with disconnectedness penalty. We use $\delta = 140$.}
\label{fig:num1}
\end{center}
\end{figure}

\begin{figure}[ht!]\begin{center}
\raisebox{-0.5\height}{\includegraphics[height=3.7cm]{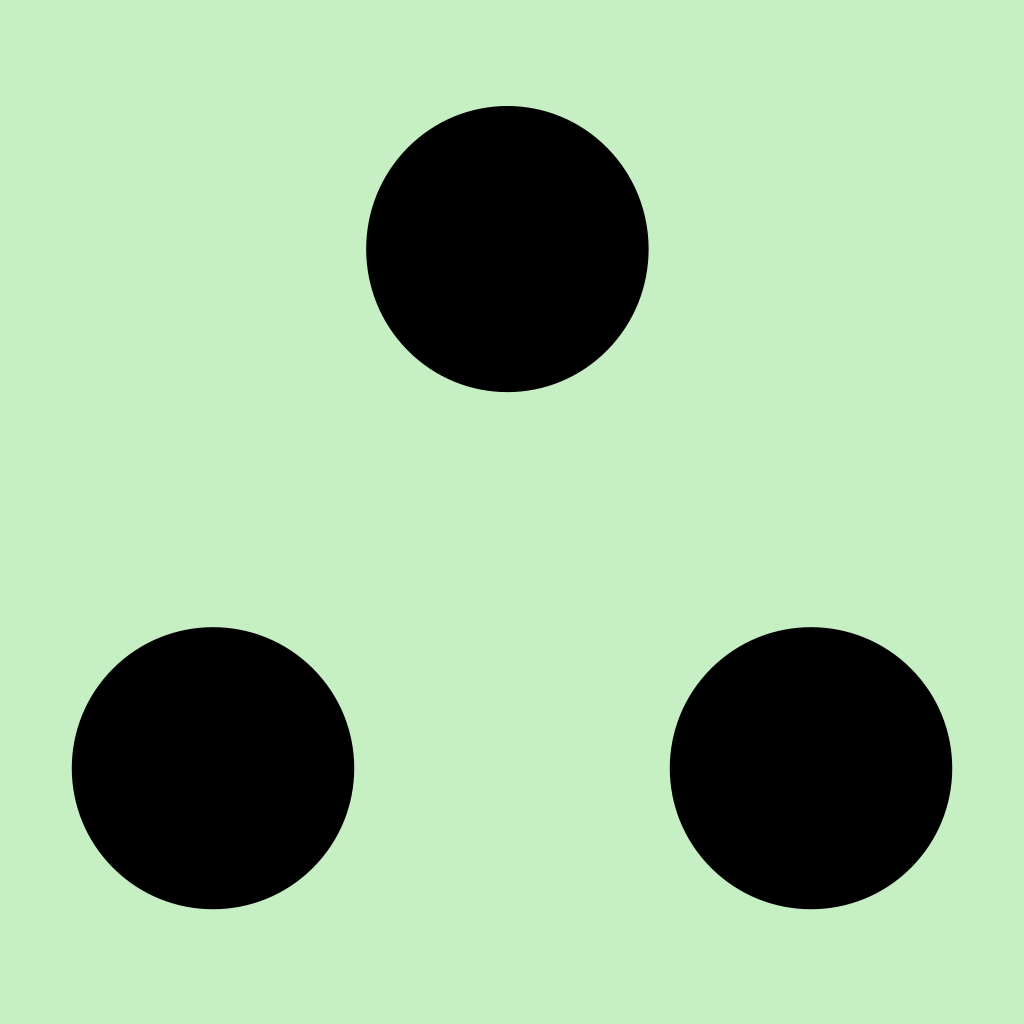}}
\raisebox{-0.5\height}{\includegraphics[height=3.7cm, clip, trim = 36.98cm 18.21cm 36.98cm 18.21cm]{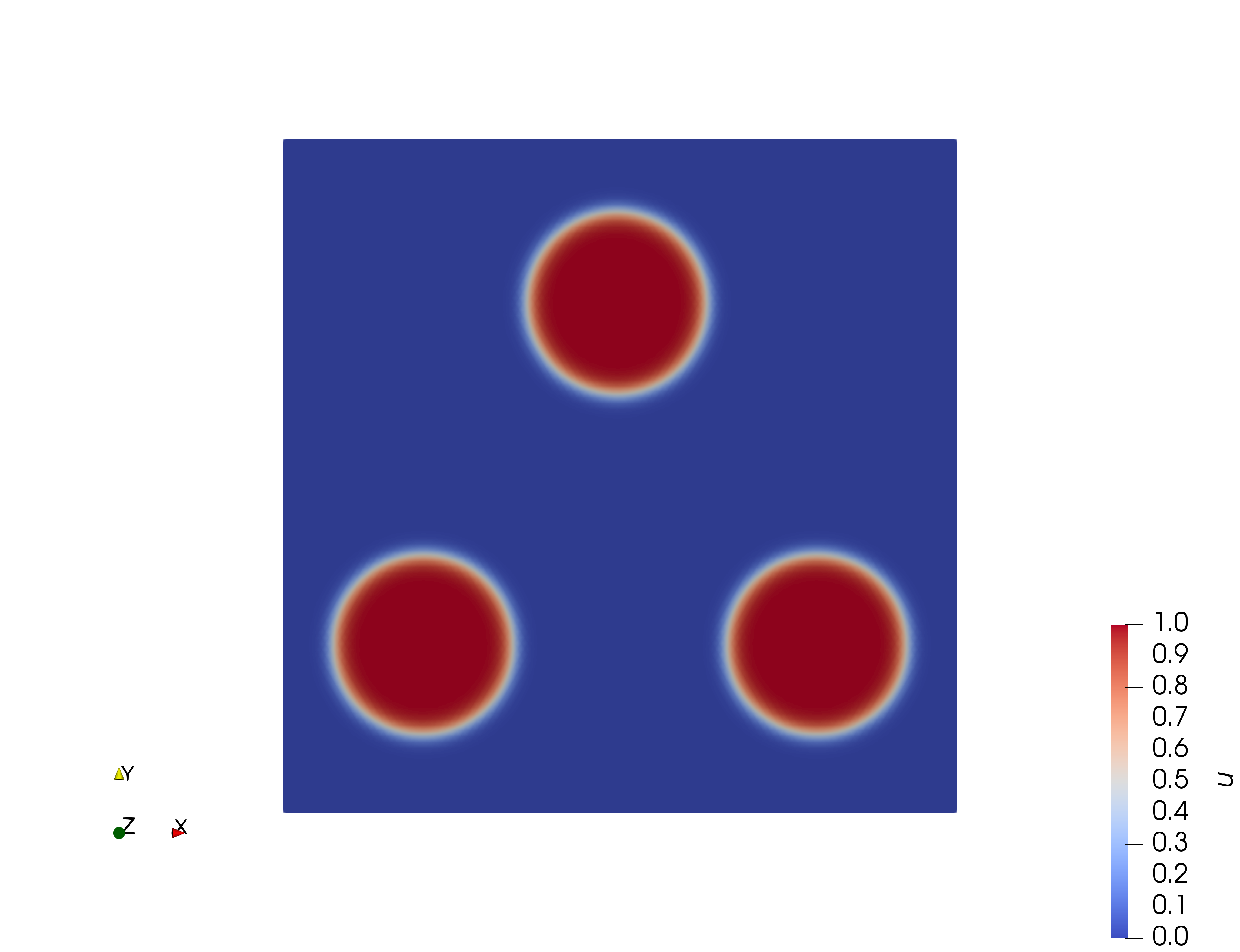}}
\raisebox{-0.5\height}{\includegraphics[height=3.7cm, clip, trim = 36.98cm 18.21cm 36.98cm 18.21cm]{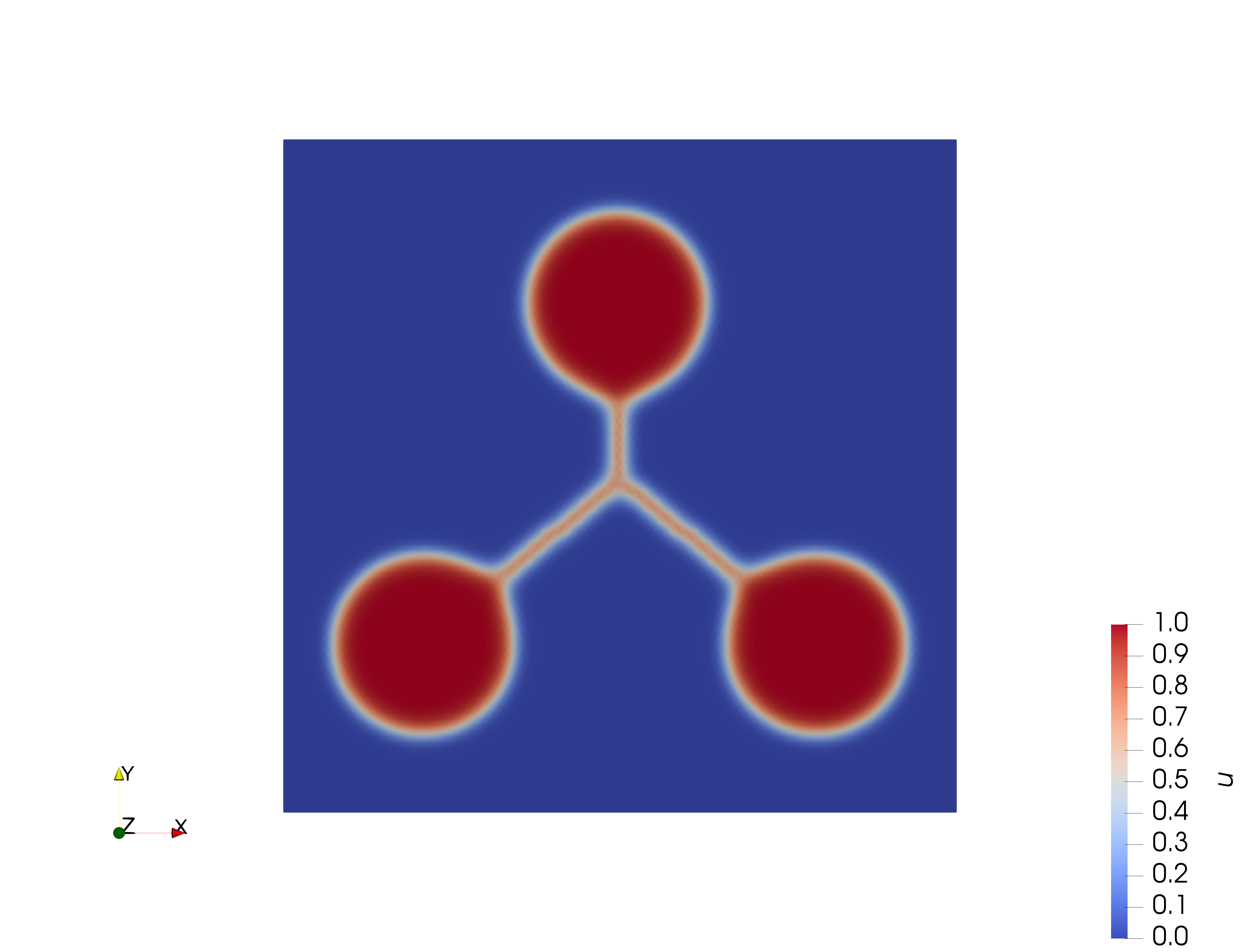}}
\raisebox{-0.5\height}{\includegraphics[height=2.5cm]{colorbar}}
\caption{Results for a numerical example using an image with disks of greater distance, see also Figure~\ref{fig:num1}. We use $\delta = 50$.}
\label{fig:num2}
\end{center}
\end{figure}

The first numerical experiment, illustrated in Figures~\ref{fig:num1} and~\ref{fig:num2}, shows that, indeed, the method produces a phase field approximation of the perimeter of a set plus (twice) its Steiner-tree. The double-layer introduced in order to maintain connectedness is clearly visible. In these simulations, the initial condition was given by $u=1$, interpolated to zero on the boundary. The additional approximate perimeter introduced through the double-layer is $1.3\cdot 10^{-2}$ for the nearby disks, and $0.87$ for the further apart disks. We note that these values are somewhat below the value for twice the length of the connecting double layers ($8.4\cdot 10^{-2}$ and $1.14$, respectively) for the figures, however, our numerical examples were performed with a fairly large value for $\alpha$.

\begin{figure}[ht!]\begin{center}
\raisebox{-0.5\height}{\includegraphics[height=3.7cm]{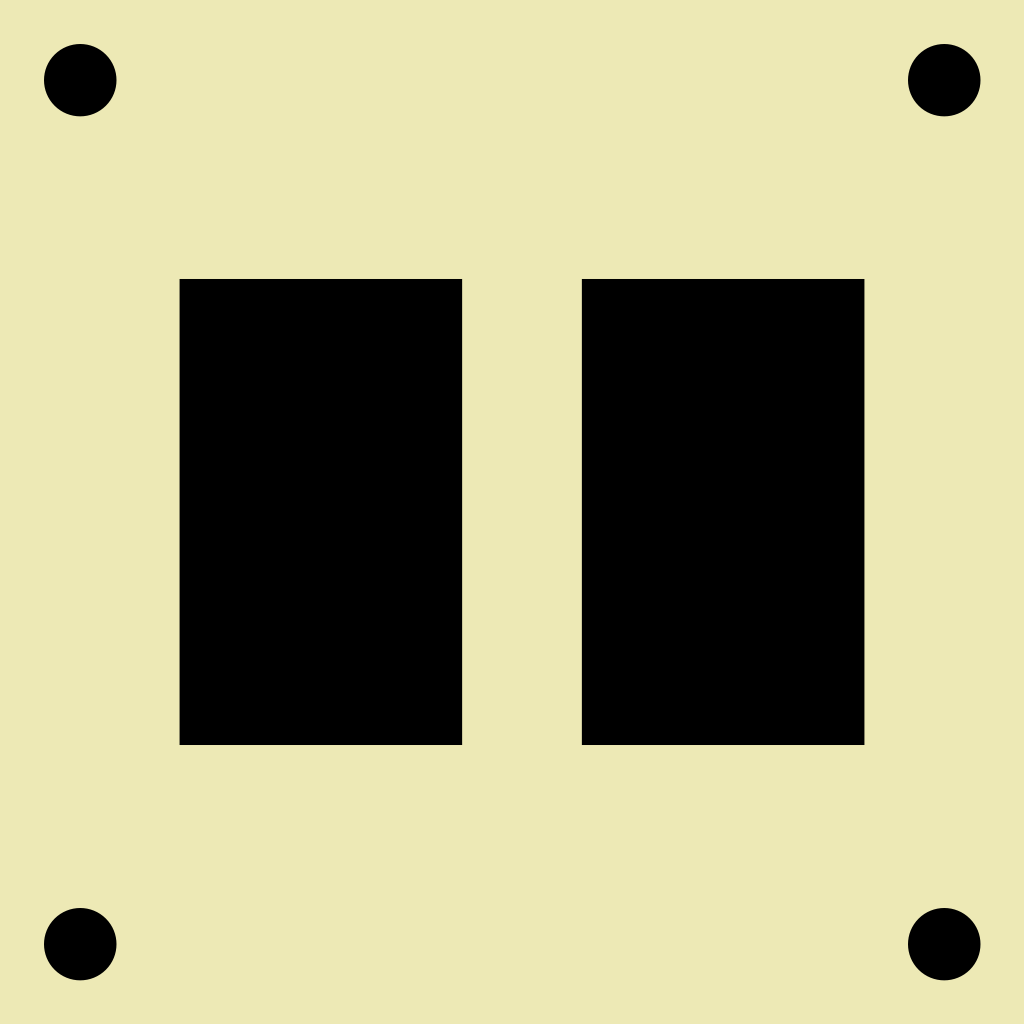}}
\raisebox{-0.0\height}{
\begin{minipage}{3.7cm}\includegraphics[height=3.7cm, clip, trim = 36.98cm 18.21cm 36.98cm 18.21cm]{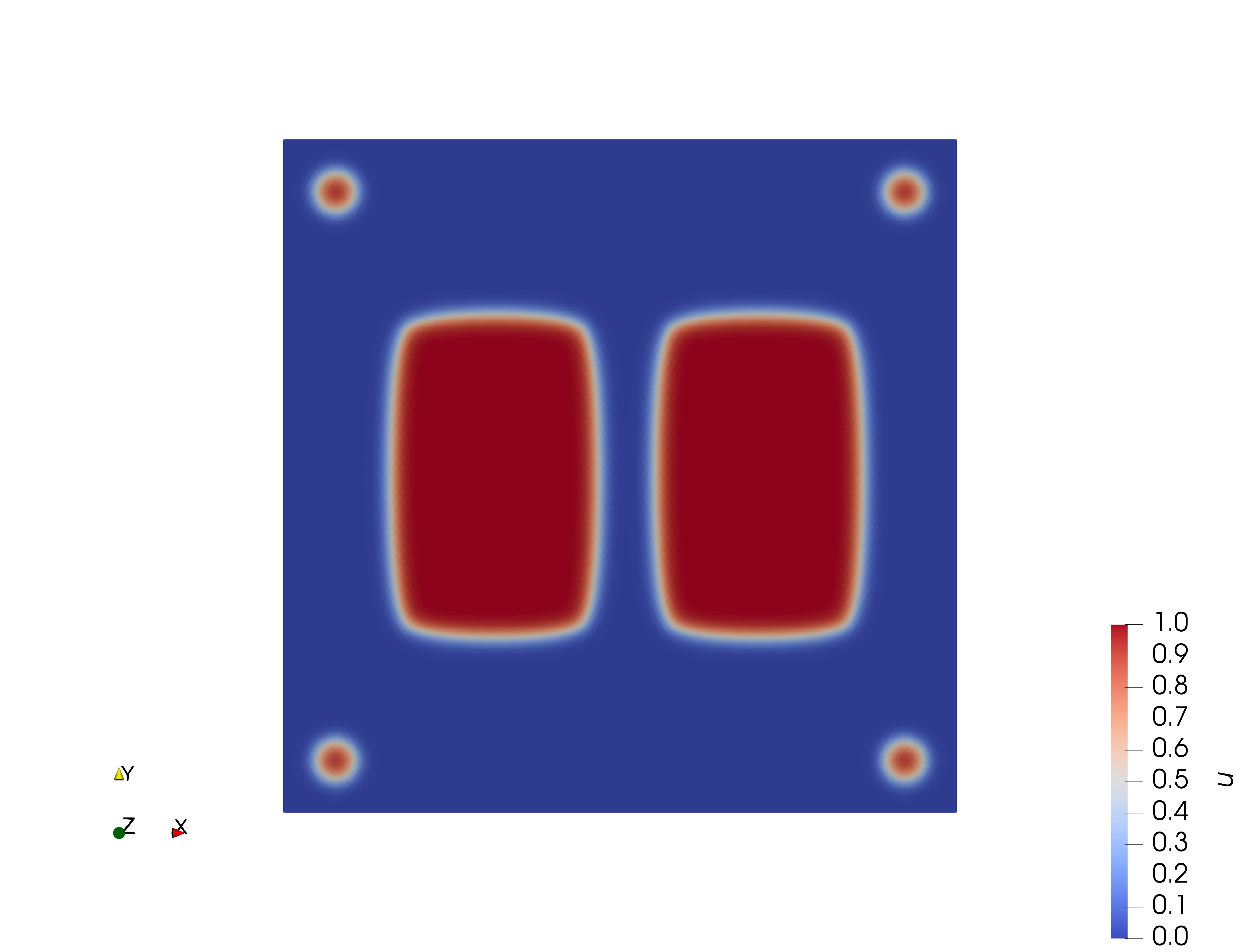} \\[2mm]
\includegraphics[height=3.7cm, clip, trim = 36.98cm 18.21cm 36.98cm 18.21cm]{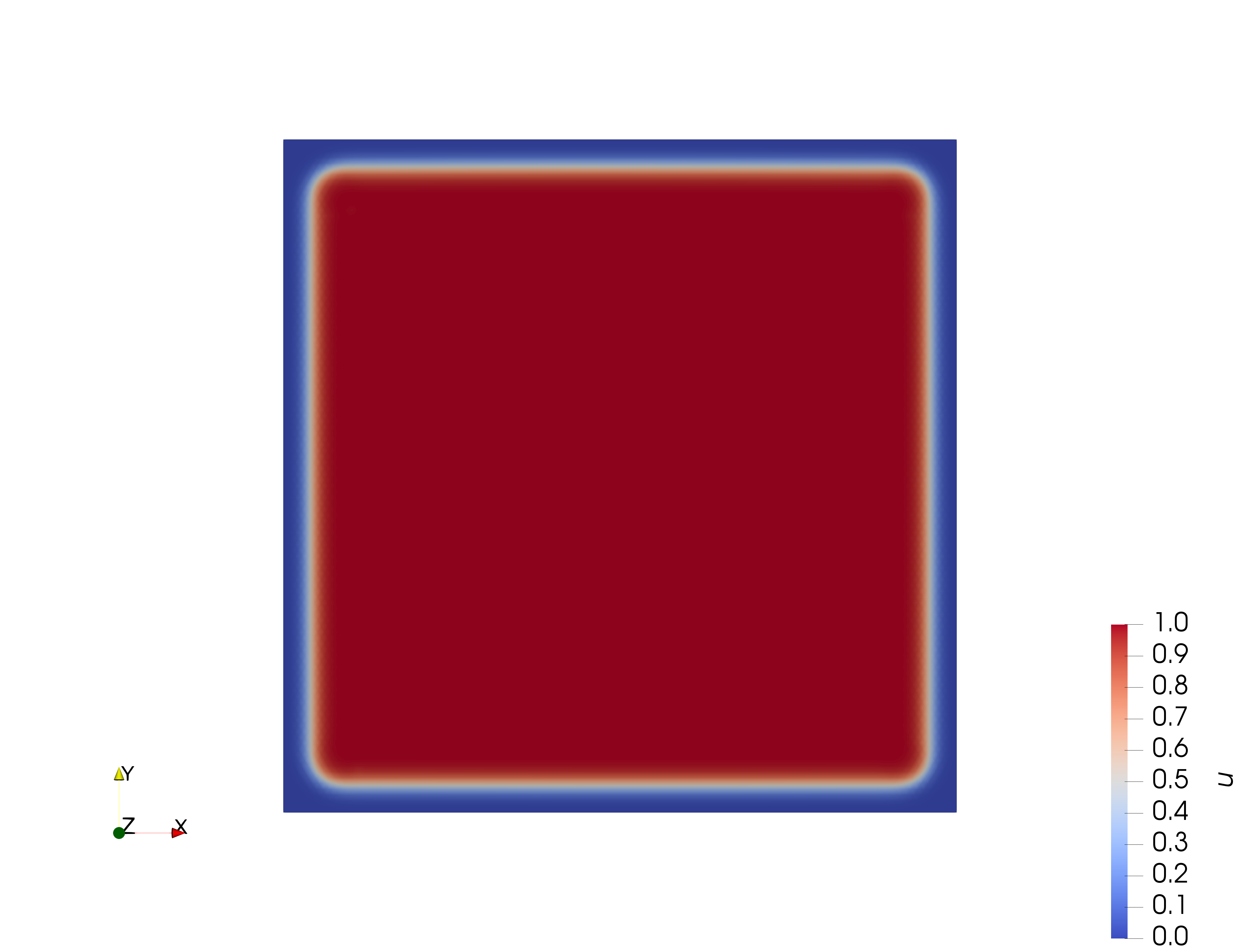}
\end{minipage}
}
\raisebox{-0.5\height}{\includegraphics[height=3.7cm, clip, trim = 36.98cm 18.21cm 36.98cm 18.21cm]{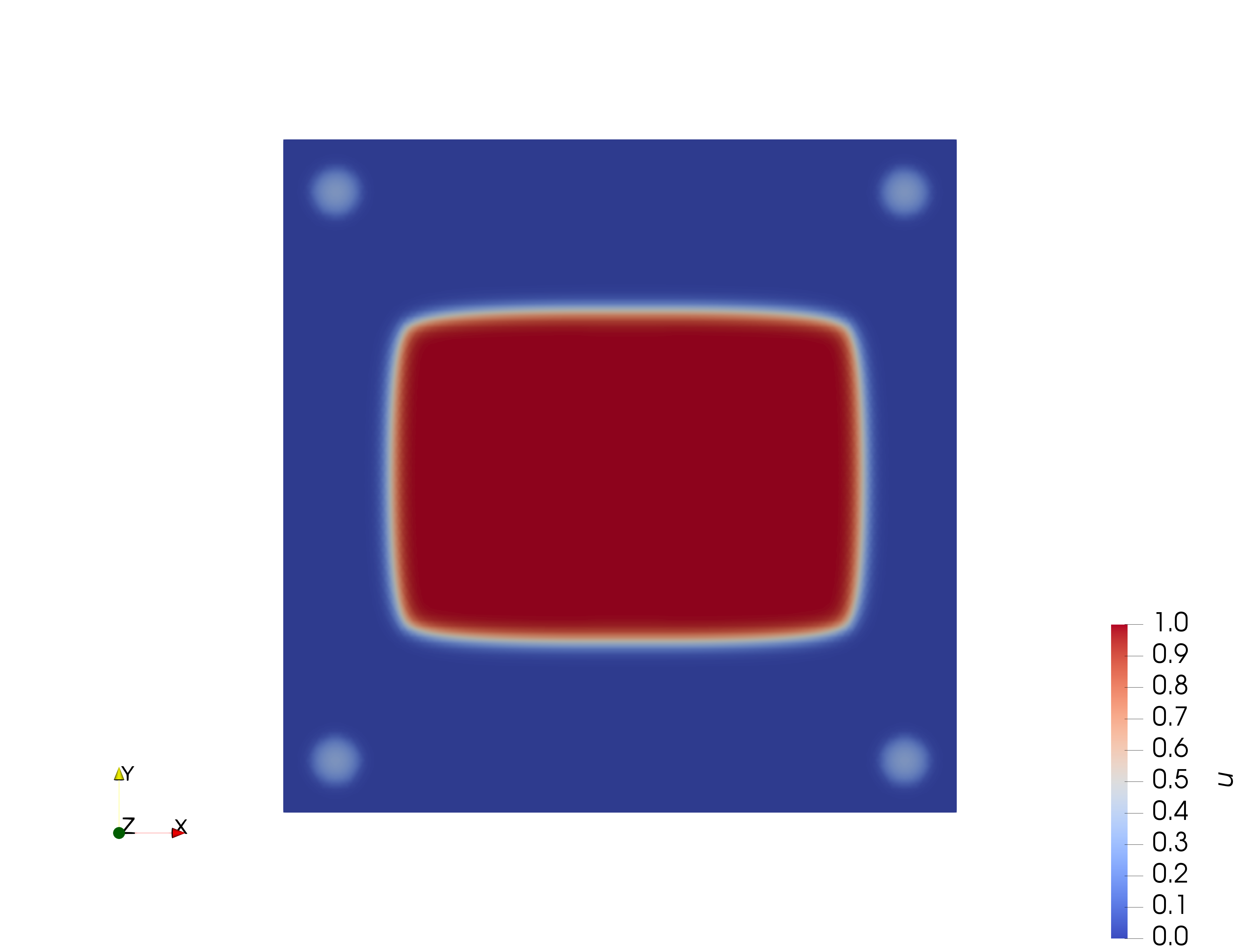}}
\raisebox{-0.5\height}{\includegraphics[height=2.5cm]{colorbar}}
\caption{Results for a numerical example to recover a rectangle that has been partly occluded. The two black rectangles form the image (with the occlusion being the yellow strip between), the four small disks are image artifacts that should be ignored in the segmentation. In the black regions we have $g=\Phi=1$, in the pale yellow region $g=\Phi=0$. The two middle images show the final stationary states without disconnectedness penalty, top for initial condition $u=g$, bottom for $u=1$. The right image is the result of the simulation with disconnectedness penalty and initial condition $u=g$. We use $\delta = 140$.}
\label{fig:num3}
\end{center}
\end{figure}

The second numerical experiment shows the applicability in image segmentation. We would like to recover an a-priori known to be connected object (in this case, a simple rectangle) which has been partly occluded (in this case, by a vertical strip). In addition, there are some smaller artifacts in the image (small disks in our example) that should be ignored. The results of this experiment are displayed in Figure~\ref{fig:num3}. It is evident, that the stationary points in the experiments without topological energy term do not yield the desired recovered image: starting with $u=1$ (as in the first experiment), the mean curvature flow becomes pinned at the obstacles. Starting with $u=g$, however, the four artifacts remain visible in the segmentation and the rectangle is divided into two pieces due to the occlusion. Adding the topological term and starting with $u=g$, however, does yield an approximation of the single rectangle -- the connectedness term creates a bridge between the two pieces which then, in the course of the gradient flow, expands. The artifacts in our experiment are small enough such that the energetically better solution is to pay the fidelity penalty as opposed to creating a connecting double layer.

\section*{Acknowledgements} PWD gratefully acknowledges partial support from the German Scholars Organization/Carl-Zeiss-Stiftung in the form of the Wissenschaftler-R{\"u}ckkehrprogramm.

\appendix

\section{Proof of Lemma \ref{lemma retraction}}

Recall the statement we are showing here: {\em If $U\subset \R^n$ is a convex open set and $K\subset U$ is compact, then there exists a $C^\infty$-diffeomorphism $\phi:\R^n\to U$ such that
\begin{enumerate}
\item $\phi(x) = x$ for all $x\in K$ and
\item $|\phi(x) - \phi(y)|\leq |x-y|$ for all $x, y\in \R^n$.
\end{enumerate}
}

\begin{proof}[Proof of Lemma \ref{lemma retraction}]
{\bf Step 1.} Since $K$ is compact, also its convex hull is compact (easily proved using sequential compactness), and since $U$ is convex, we find that $\conv(K) \subset U$. In particular, $\delta:= \dist(\conv(K), \partial U) >0$. The distance function
\[
d:\R^n \to [0,\infty), \qquad d(x) = \dist(x, \conv(K))
\]
is $1$-Lipschitz and convex since for $x,y\in \R^n$ and $\lambda\in [0,1]$ we have
\begin{align*}
\lambda\,d(x) + (1-\lambda)\,d(y) &= \lambda\,\big|x-\pi(x)\big| + (1-\lambda)\,\big|y-\pi(y)\big|\\
	&= \big|\lambda x-\lambda \,\pi(x)\big| +\,\big|(1-\lambda) y-(1-\lambda)\,\pi(y)\big|\\
	&\geq \big|\lambda x-\lambda \,\pi(x) + (1-\lambda) y-(1-\lambda)\,\pi(y)\big|\\
	&= \big| \big(\lambda x + (1-\lambda)\,y\big) - \big(\lambda\,\pi(x) + (1-\lambda)\pi(y)\big)\big|\\
	&\geq d\left(\lambda x+ (1-\lambda)y\right)
\end{align*}
where $\pi:\R^n\to \conv(K)$ denotes the closest point projection onto a closed convex set $\conv(K)$. Since $d$ is convex, also its convolution $d_\eps$ with a standard mollifier of scale $\eps$ is convex as the convexity property is preserved due to the linearity of the operation. Now, when we choose $\eps$ so small that $\|d- d_\eps\|_{L^\infty} < \frac \delta3$, we can use Sard's theorem and the regular value theorem together with the convexity of $d_\eps$ to find $\frac\delta3 < t < \frac{2\delta}3$ such that $E:= \{ d_\eps< t\}$ satisfies the following:
\begin{enumerate}
\item $\conv(K) \cc E \cc U$,
\item $E$ is convex and
\item $\partial E\in C^\infty$.
\end{enumerate}
We can now forget $K, \pi$ and $\delta$ and only work with $E$.

{\bf Step 2.} Now denote $\delta := \dist(\partial E, \partial U)$, $d_x := \dist(x, \overline E)$ and let $\pi$ be the closest point projection onto $\overline E$. $\pi$ is a $1$-Lipschitz map which is the identity on $\overline E$ and compresses the exterior space $\R^n\setminus \overline E$ into $\partial E\subset \overline E\cc U$. It is $C^\infty$-smooth on $E$ and $\R^n \setminus \overline E$, but only continuous at $\partial E$, and definitely not a diffeomorphism. We can, however, use the little space between $\overline E$ and $\partial U$ to make it smooth and a diffeomorphism.

Let $f:[0,\infty)\to [0, \frac\delta 2)$ be a $C^\infty$-function such that
\[
f(t) = t \quad\forall\ t< \delta/4, \qquad 0< f'\leq 1.
\]
Any point $x\in \R^n \setminus \overline E$ can be written as $x = \pi(x) + d(x)\,\nu_{\pi(x)}$ where $\nu$ is the exterior normal field to $\partial E$ -- it is well-known that the map
\[
\tilde \phi:\partial E\times (-\eps, \eps) \to \{\dist(\cdot, \partial E)<\eps\}, \qquad \tilde \phi(x, t) = x+t\nu_x
\]
is always a diffeomorphism for small enough $\eps$ (see e.g.\ \cite[Section 14.6]{gilbarg:2001vb}), and since $E$ is convex, it is easy to show that the map is bijective on the whole exterior domain using the uniqueness of the closest point projection. It is a diffeomorphism since when $\tau$ is a tangent vector to $\partial E$ we have
\[
\nabla_\tau \tilde\phi(x,t) = \tau + t\,A(\tau), \qquad \partial_t\phi(x,t) = \nu_x
\]
where and $A$ denotes the shape operator of $\partial E$, and because $E$ is convex, $\langle \tau, A(\tau)\rangle\geq0$ and thus the derivative map is injective.
Abbreviating $d_x=d(x)$, we define the new diffeomorphism
\[
\phi: \R^n \to U, \qquad \phi(x) = \begin{cases} x & x\in E\\ \pi(x) + f(d_x)\,\nu_{\pi(x)} &x\notin E.\end{cases}
\]
Since $\phi(x) = x$ for all $x$ in a neighbourhood of $\overline E$, the function $\phi$ is $C^\infty$-smooth. It remains to show that it is a non-expansive diffeomorphism. 

{\bf Step 3.} For diffeomorphic smoothness, we only need to show that
\[
\phi: \R^n\setminus \overline E \to \left\{\dist (\cdot, E) < \frac\delta2\right\}\setminus \overline E
\]
is a diffeomorphism since $\phi=\id$ on a neighbourhood of $\overline E$. But this is obvious since $\phi$ is given as
\[
\phi(x) = \tilde \phi \circ \alpha \circ \tilde\phi^{-1}(x) \qquad\text{where }\alpha:\partial E\times (0, \infty) \to \partial E\times\left(0, \frac\delta2\right), \quad \alpha(x, t) = \big(x, f(t)\big)
\]

{\bf Step 4.} To see that $\phi$ is non-expansive, we need to check this in the cases that $x, y\in \R^n\setminus \overline E$ and $x\in \overline E$, $y\in \R^n\setminus \overline E$. Let us look at the simpler second case first. Then
\begin{align*}
\big|\phi(x) - \phi(y)\big|^2 &= \big| x - \big[ \pi(y) + f(d_y)\nu_y\big]\big|^2\\
	&= \big|x - \pi(y)\big|^2 + 2\,f(d_y)\,\langle x-\pi(y), -\nu_y\rangle + f(d_y)^2\\
	&\leq \big|x - \pi(y)\big|^2 + 2\,d_y\,\langle x-\pi(y), -\nu_y\rangle + d_y^2\\
	&= |x-y|^2
\end{align*}
since $f(d_y) \leq d_y$ and since $x\in \overline E$, so by a common characterization of the closest point projection
\[
\langle -\nu_y, x-\pi(y)\rangle = \frac{1}{|y-\pi(y)|} \,\langle y-\pi(y), \pi(y) - x\rangle \geq 0.
\]
In the first case, we have
\begin{align*}
\big|\phi(x) - \phi(y)\big|^2 &=\big| \pi(x)+ f(d_x)\,\nu_{\pi(x)} - \big[\pi(y) + f(d_y)\,\nu_{\pi(y)}\big]\big|^2\\
	&= \big|\pi(x) - \pi(y)\big|^2 + 2\,\langle \pi(x) - \pi(y), f(d_x) \nu_{\pi(x)} - f(d_y)\,\nu_{\pi(y)}\rangle\\
		&\qquad + \big|f(d_x)\,\nu_{\pi(x)} - f(d_y)\nu_{\pi(y)}\big|^2\\
	&= \big|\pi(x) - \pi(y)\big|^2 + 2\,f(d_x) \langle \nu_{\pi(x)}, \pi(x)-\pi(y)\rangle + 2\,f(d_y)\langle \nu_{\pi(y)}, \pi(y)-\pi(x)\rangle\\
		&\qquad + \big|f(d_x)\,\nu_{\pi(x)} - f(d_y)\nu_{\pi(y)}\big|^2\\
	&\leq \big|\pi(x) - \pi(y)\big|^2 + 2\,d_x \langle \nu_{\pi(x)}, \pi(x)-\pi(y)\rangle + 2\,d_y\langle \nu_{\pi(y)}, \pi(y)-\pi(x)\rangle\\
		&\qquad + \big|f(d_x)\,\nu_{\pi(x)} - f(d_y)\nu_{\pi(y)}\big|^2
\end{align*}
as before. To treat the last term, consider $0<s\leq t$ and $\nu, e\in S^{n-1}$ and observe that
\begin{align*}
\big|f(t)\nu - f(s)\,e\big|^2 &= \big|\,\big[ f(t)-f(s)\big]\nu + f(s)\,[\nu-e]\,\big|^2\\
	&= \big[f(t)-f(s)\big]^2 + 2\big[f(t)-f(s)\big]\,f(s) \,\langle \nu, \nu-e\rangle + f(s)^2|\nu-e|^2\\
	&\leq |t-s|^2 + 2[t-s]\,s\,\langle \nu, \nu-e\rangle + s^2|\nu-e|^2\\
	&= |t\nu-se|^2.
\end{align*}
Applying this in the above inequality, we find that in total
\begin{align*}
\big|\phi(x) - \phi(y)\big|^2 &\leq \big|\pi(x) - \pi(y)\big|^2 + 2\,d_x \langle \nu_{\pi(x)}, \pi(x)-\pi(y)\rangle + 2\,d_y\langle \nu_{\pi(y)}, \pi(y)-\pi(x)\rangle\\
		&\qquad + \big|d_x\,\nu_{\pi(x)} - d_y\nu_{\pi(y)}\big|^2\\
	&= |x-y|^2.
\end{align*}
This concludes the proof.
\end{proof}

%\bibliographystyle{habbrv}
%\bibliography{full_corrected}

\end{document}